\documentclass[sn-mathphys-num]{sn-jnl}

\usepackage{graphicx}%
\usepackage{multirow}%
\usepackage{amsmath,amssymb,amsfonts}%
\usepackage{amsthm}%
\usepackage{mathrsfs}%
\usepackage[title]{appendix}%
\usepackage{xcolor}%
\usepackage{textcomp}%
\usepackage{manyfoot}%
\usepackage{booktabs}%
\usepackage{algorithm}%
\usepackage{algorithmicx}%
\usepackage{algpseudocode}%
\usepackage{listings}%
\usepackage{tikz}

\theoremstyle{thmstyleone}%
\newtheorem{theorem}{Theorem}
\newtheorem{proposition}[theorem]{Proposition}%
\newtheorem{lemma}[theorem]{Lemma}%
\theoremstyle{thmstyletwo}%
\newtheorem{remark}{Remark}%
\newtheorem{corollary}{Corollary}
\theoremstyle{thmstylethree}%
\newtheorem{definition}{Definition}%

\DeclareMathOperator*{\argmin}{argmin}

\DeclareMathOperator*{\epi}{epi}

\def\approxleq{ \kern3pt \mbox{\raisebox{.6ex}{$<$}} \kern-8pt
	\mbox{\raisebox{-.6ex}{$\sim$}} \kern5pt}

\def\bR{\mathbb{R}} \def\bS{\mathbb{S}}

\def\cA{{\cal A}}

\begin{document}

\title[A Level Set Method for the Least-Squares Constrained Nuclear Norm Minimization]{A Level Set Method with Secant Iterations for the Least-Squares Constrained Nuclear Norm Minimization}


\author[1]{\fnm{Chiyu} \sur{Ma}}\email{chiyu.ma@connect.polyu.hk}

\author[1]{\fnm{Jiaming} \sur{Ma}}\email{jiaming.ma@connect.polyu.hk}

\author[1]{\fnm{Defeng} \sur{Sun}}\email{defeng.sun@polyu.edu.hk}

\affil[1]{ \orgdiv{Department of Applied Mathematics}, \orgname{The Hong Kong Polytechnic University}, \orgaddress{\city{Hung Hom},  \state{Hong Kong}}}

%


\abstract{We present an efficient algorithm for least-squares constrained nuclear norm minimization, a computationally challenging problem with broad applications. Our approach combines a level set method with secant iterations and a proximal generation method. As a key theoretical contribution, we establish the nonsingularity of the Clarke generalized Jacobian for a general class of projection norm functions over closed convex sets. This property and the (strong) semismoothness of our value function yield fast local convergence of the secant method. For the resulting nuclear norm regularized subproblems, we develop a proximal generation method that exploits low-rank structures without compromising convergence. Extensive numerical experiments demonstrate the superior performance of our approach compared to state-of-the-art methods.}

\keywords{Nuclear norm, Low-rank matrix, Proximal generation, Semismooth analysis, Least-squares constraint, Clarke generalized Jacobian}


\pacs[MSC Classification]{90C25, 49J52, 90C31}

\maketitle

\section{Introduction}\label{sec1}

In this paper, we consider the following least-squares constrained nuclear norm minimization:
\begin{equation}\label{eq:lscnnm}\tag{CP($\varrho$)}
	\min _{X \in \mathbb{R}^{m\times n}}\|X\|_* \quad\text{ s.t. }\|\cA X-b\| \leqslant \varrho,
\end{equation} 
where $\|\cdot\|_*$ denotes the nuclear norm, $\|\cdot\|$ represents the Euclidean norm for vectors (or the Frobenius norm for matrices), $b\in\bR^p$, $\cA$ is a linear mapping from $\bR^{m\times n}$ to $\bR^p$, and $0<\varrho<\|b\|$. We assume that the problem \eqref{eq:lscnnm} is feasible and $\|\cA^*b\|_2>0$. It is observed that the least-squares constraint is active at any optimal solution. The least-squares constraint quantifies the fitting accuracy of the target model, while the nuclear norm in the objective function aims to enhance the low-rank nature of the solution.

A more prevalent model is the nuclear norm regularized least-squares problem:
\begin{equation}\label{eq:nnrls}\tag{LS($\lambda$)-P}
	\min _{X \in \mathbb{R}^{m\times n}}f_\lambda(X):=\lambda\|X\|_* +\frac{1}{2}\|\cA X-b\|^2 .
\end{equation}
This formulation is computationally easier, and methods such as Accelerated Proximal Gradient (APG)\cite{Toh2010}, Alternating Direction Method of Multipliers (ADMM) \cite{Fazel2013}, and Proximal Point Algorithm (PPA) \cite{Jiang2013, Jiang2014} have been developed for it. When dealing with large-scale matrices for which SVD is intractable, low-rank decomposition methods \cite{Burer2003, Burer2005, Lee2024} serve as the de facto solution.

Compared to \eqref{eq:nnrls}, the constrained formulation \eqref{eq:lscnnm} provides enhanced noise control through the tuning parameter $\varrho$, which makes it more practical. However, the least-squares constraint and the nonpolyhedral nuclear norm introduce additional computational challenges. Efficient solvers for this problem would bridge the gap between theoretical guarantees and practical usability.

The level set method \cite{VanDenBerg2009, VandenBerg2011, Aravkin2019} is a notable approach to address the least-squares constrained problem \eqref{eq:lscnnm}. This kind of level set method exchanges the objective and constraints. Specifically, it transforms the original least-squares constrained problems \eqref{eq:lscnnm} into a univariate nonlinear equation:
	$$\phi(\tau):=\|\mathcal{A}X(\tau)-b\|=\varrho, \text{ where } X(\tau)\in\arg\min_X \left\lbrace\frac{1}{2}\|\mathcal{A}X-b\|^2 \mid \|X\|_*\leqslant\tau \right\rbrace.$$
Although this nuclear norm constrained least-squares problem can be tackled using the projected gradient method, a conventional choice in prior studies, this approach may not be computationally efficient. 

A different approach involves a variation of the level set method that solves a series of regularized subproblems \cite{Li2018}. It reformulates problem \eqref{eq:lscnnm} into the following univariate nonlinear equation:
\begin{equation}\label{eq:une}
	\varphi(\lambda):=\|\cA X^*(\lambda)-b\|=\varrho,
\end{equation}
where $X^*(\lambda)$ is an optimal solution of \eqref{eq:nnrls}. This univariate function $\varphi$ is referred to as the value function. The nuclear norm regularized least-squares problem \eqref{eq:une} is typically more computationally tractable than the constrained one \eqref{eq:lscnnm}.

For the sparse optimization variant of \eqref{eq:lscnnm} with a polyhedral objective, the sieving-based secant method \cite{Li2024}, which combines the level set method \cite{Li2018}, adaptive sieving \cite{Yuan2025}, and the secant method \cite{Potra1998}, is computationally efficient. However, our problem differs fundamentally due to its nonpolyhedral objective and low-rank pursuit nature. Although existing matrix screening rules \cite{Hsieh2014, Zhou2015, Zhong2023} attempt to leverage the low-rank structure of \eqref{eq:nnrls} solutions, their inherent conservatism prevents direct application to our formulation. These limitations highlight the critical need for new computational frameworks that can simultaneously address nonpolyhedral objectives while effectively exploiting low-rank structures.

In this paper, we propose a proximal generation based level set method with secant iterations, specifically designed for the least-squares constrained nuclear norm minimization problem \eqref{eq:lscnnm}. Building upon the level set framework \cite{Li2018} and the secant method \cite{Potra1998}, our approach achieves fast convergence by ensuring superlinear convergence rates for the value function in the outer level set iterations. For the sequence of regularized subproblems, the proposed proximal generation method fully exploits the low-rank structure of solutions to reduce computational costs without compromising global convergence. Together, these features enable our method to achieve superior performance compared to existing approaches.

We summarize the main contributions of this paper as follows:
\begin{enumerate}
	\item We establish the nonsingularity of Clarke generalized Jacobian $\partial \psi(\cdot)$ for the projection norm function $\psi(\lambda):=\|\Pi_{\lambda C}(b)\|$ where $C$ is a closed convex set with $0<\lambda<\Upsilon(b\mid C)<+\infty$. The value function $\varphi(\lambda)$ in \eqref{eq:une} is a specific instance of $\|\Pi_{\lambda C}(b)\|$. We also provide a sufficient condition for the strong semismoothness of $\varphi(\lambda)$. Consequently, the secant method for \eqref{eq:une} can achieve a fast convergence rate.
	\item To efficiently solve a sequence of nuclear norm regularized problems \eqref{eq:nnrls}, we design a proximal generation method that leverages the low-rank decomposition of the nuclear norm to reduce dimensionality and employs proximal gradient techniques to ensure global convergence. This is the first attempt to extend the adaptive sieving strategy \cite{Yuan2025} to nonpolyhedral and even nonconvex cases. Combined with the secant method, our proximal generation based level set approach boosts efficiency in the least-squares constrained nuclear norm minimization problems.
	\item We develop several practical implementation techniques and conduct extensive numerical experiments on matrix regression and matrix completion. Compared with state-of-the-art algorithms, our proximal generation based level set method with secant iterations exhibits superior numerical performance. 
\end{enumerate}

The organization of the paper is as follows. Section 2 provides some necessary preliminaries. In Section 3 our level set method with secant iterations is proposed. We analyze the nonsingularity of $\partial \varphi(\lambda)$ and the (strong) semismoothness of $\varphi(\lambda)$. Section 4 delves into the introduction of a proximal generation method tailored for subproblems within the level set paradigm. In Section 5, we conduct numerical experiments focused on low-rank matrix regressions and completions. The paper is concluded in the final section.
\bmhead{Notations} Our notation adheres to standard conventions, with specific symbols clarified upon initial use where necessary. In the following, we list some commonly used ones. The set of positive real numbers is denoted by $\bR_{++}$. The epigraph of the spectral norm (a matrix cone) is denoted by $\epi \|\cdot\|_2:=\left\lbrace(t,X)\mid t\geqslant \|X\|_2 \right\rbrace $. The dual cone of the epigraph of the spectral norm is the epigraph of the nuclear norm $\epi \|\cdot\|_*:=\left\lbrace(t,X)\mid t\geqslant \|X\|_* \right\rbrace $. For a proper closed convex function $f:\bR^n\to \left( -\infty,\infty\right] $, its proximal mapping is defined as $\operatorname{prox}_{f(\cdot)}(z)=\argmin_x f(x)+\frac{1}{2}\|x-z\|^2$ where the subscript $(\cdot)$ could be omitted when there is no ambiguity. The indicator function of a closed nonempty convex set $C$ is denoted as \[\delta_C(x):=\begin{cases}
	0& \text{ if $x\in C$;}\\
	+\infty & \text{ else. }
\end{cases}\]
The proximal mapping of an indicator function $\delta_C(\cdot)$ is the projection onto $C$, denoted as $\Pi_C(z)$. The gauge function of a convex set $C$ is defined as $\Upsilon(x\mid C):=\inf\left\lbrace \lambda\geqslant0\mid x\in\lambda C\right\rbrace $. 

\section{Preliminaries}
Let $\mathcal{X}$ and $\mathcal{Y}$ be two real Euclidean spaces. The set of linear operators from $\mathcal{X}$ to $\mathcal{Y}$ is denoted as $\mathcal{L}(\mathcal{X},\mathcal{Y})$. 
Let $F: \mathcal{O} \rightarrow \mathcal{Y}$ be a locally Lipschitz continuous function where $\mathcal{O}\subset \mathcal{X}$ is an open set. It is known from Rademacher's theorem that $F$ is Fr\'{e}chet-differentiable almost everywhere on $\mathcal{O}$. Denote $D_F$ the set of all differentiable points of $F$ on $\mathcal{O}$ and $F'(x)$ as the corresponding Jacobian. The B-subdifferential of $F(\cdot)$ at $x \in \mathcal{O}$ is defined as 
\begin{equation*}
	\partial_{B} F(x):=\left\{V\in \mathcal{L}(\mathcal{X},\mathcal{Y})\mid  \exists\left\{x^k\right\} \subseteq D_F \text { s.t. } \lim _{k \rightarrow \infty} x^k=x \text { and } \lim _{k \rightarrow \infty} F'\left(x^k\right)=V \right\}.
\end{equation*}
The Clarke generalized Jacobian of $F(\cdot)$ at $x \in \mathcal{O}$ is defined as the convex hull of B-subdifferentials at this point, i.e., $\partial F(x):=\operatorname{conv}\left(\partial_{B} F(x)\right)$. Both $\partial_{\mathrm{B}} F(\cdot)$ and $\partial F(\cdot)$ are compact-valued and upper semicontinuous.
For any convex function $f:\mathbb{R}^n \to \mathbb{R}$, the Clarke generalized Jacobian is equivalent to subdifferential in the context of convex analysis \cite[Proposition 2.2.7]{Clarke1990}.

Here we review the concept of (strong) G-semismoothness.
\begin{definition}\label{def:gss}
	Let $F:\mathcal{O}\subset\mathcal{X}\to\mathcal{Y}$ be a locally Lipschitz continuous function on the open set $\mathcal{O}$. Function $F$ is said to be G-semismooth at $x\in\mathcal{O}$ if for any $h\to0$ and $V\in\partial F(x+h)$, 
	\begin{equation}\label{eq:def-gssm}
		F(x+h)-F(x)-Vh=o(\|h\|).
	\end{equation}
	Let $\gamma$ be a positive constant. Function $F$ is said to be $\gamma-$order G-semismooth at $x\in\mathcal{O}$ if for any $h\to0$ and $V\in\partial F(x+h)$, 
	\begin{equation}\label{eq:def-gammassm}
		F(x+h)-F(x)-Vh=O(\|h\|^{1+\gamma}).
	\end{equation}
	Function $F$ is said to be G-semismooth ($\gamma-$order G-semismooth) on $\mathcal{O}$ if it is G-semismooth ($\gamma-$order G-semismooth) at any point $x\in\mathcal{O}$.
	Function $F$ is called strongly G-semismooth at $x$ if it is $1-$order G-semismooth at $x$.
\end{definition}
Another related concept is the directional differentiability. We say that $F$ is directionally differentiable at $x\in\mathcal{O}$ if the limit \[F'(x;h):=\lim_{t\downarrow0}\frac{F(x+th)-F(x)}{t}\] exists for every direction $h\in\mathcal{X}$. According to \cite{Shapiro1990}, for a locally Lipschitz continuous function $F:\mathcal{X}\to\mathcal{Y}$ that is directionally differentiable at $x$, we have for any $h\to0$
\begin{equation}\label{eq:dd-bd}
	F(x+h)-F(x)-F'(x;h)=o(\|h\|).
\end{equation}  
If function $F:\mathcal{O}\subset\mathcal{X}\to\mathcal{Y}$ is both G-semismooth and directionally differentiable at $x\in\mathcal{O}$, it is said to be semismooth at $x\in\mathcal{O}$. The concept of semismoothness on a set is as defined in Definition \ref{def:gss}. Similarly to the $\gamma-$order G-semismooth \eqref{eq:def-gammassm}, 
we can strengthen the directional differentiability as follows.
\begin{definition}
	Let $\gamma$ be a positive constant. Function $F:\mathcal{O}\subset\mathcal{X}\to\mathcal{Y}$ is said to be $\gamma-$order directionally differentiable at $x\in\mathcal{O}$ if for any $h\to0$ 
	\begin{equation}\label{eq:cbd}
		F(x+h)-F(x)-F'(x;h)=O(\|h\|^{1+\gamma}).
	\end{equation}  
	Function $F$ is said to be ($\gamma-$order) directionally differentiable on $\mathcal{O}$ if it is ($\gamma-$order) directionally differentiable at any point $x\in\mathcal{O}$.
	Function $F$ is called calmly B-differentiable at $x$ if \eqref{eq:cbd} holds with $\gamma=1$.
\end{definition}
If function $F:\mathcal{O}\subset\mathcal{X}\to\mathcal{Y}$ is both $\gamma-$order G-semismooth and $\gamma-$order directionally differentiable at $x\in\mathcal{O}$, it is said to be $\gamma-$order semismooth at $x\in\mathcal{O}$.
If function $F$ is both strongly G-semismooth and calmly B-differentiable at $x\in\mathcal{O}$, it is said to be strongly semismooth at $x\in\mathcal{O}$.
\begin{proposition}
	Let $F:\mathcal{O}\subset\mathcal{X}\to\mathcal{Y}$ be a locally Lipschitz continuous function on the open set $\mathcal{O}$ and $x\in\mathcal{O}$. The following statements are equivalent for any $\gamma>0$:
	
	(i) $F$ is $\gamma-$order semismooth at $x$, i.e., \eqref{eq:def-gammassm} and \eqref{eq:cbd} hold;
	
	(ii) for any $h\to 0$ and $V\in\partial F(x+h)$, it holds
	\begin{equation}\label{eq:def-gammass1}
		Vh-F'(x;h)=O(\|h\|^{1+\gamma});
	\end{equation}
	
	(iii) for any $h\to 0$ and $x+h\in D_F$, it holds
	\begin{equation}\label{eq:def-gammass2}
		F'(x+h;h)-F'(x;h)=O(\|h\|^{1+\gamma}).
	\end{equation} 
\end{proposition}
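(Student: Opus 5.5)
The plan is to prove the cycle (i) $\Rightarrow$ (ii) $\Rightarrow$ (iii) $\Rightarrow$ (i). Throughout, $F$ is Lipschitz with some constant $L$ near $x$, and we assume directional differentiability near $x$ wherever a directional derivative appears; in (iii) this only concerns points of $D_F$, where $F'(x+h;h)=F'(x+h)h$.

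For (i) $\Rightarrow$ (ii), take $V\in\partial F(x+h)$ and write
\[
Vh-F'(x;h)=\bigl[F(x+h)-F(x)-F'(x;h)\bigr]-\bigl[F(x+h)-F(x)-Vh\bigr].
\]
The first bracket is $O(\|h\|^{1+\gamma})$ by \eqref{eq:cbd}, and the second is $O(\|h\|^{1+\gamma})$ by \eqref{eq:def-gammassm}, which gives \eqref{eq:def-gammass1}.

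For (ii) $\Rightarrow$ (iii), note that if $x+h\in D_F$, then $F'(x+h)\in\partial_B F(x+h)\subseteq\partial F(x+h)$. Moreover $F'(x+h;h)=F'(x+h)h$, so \eqref{eq:def-gammass2} is a special case of \eqref{eq:def-gammass1}.

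The substantive direction is (iii) $\Rightarrow$ (i), and within it the main obstacle is recovering \eqref{eq:cbd} from information that is only available on $D_F$.

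\textbf{Step 1: \eqref{eq:cbd}.} Fix a small $h$ and consider the segment $t\mapsto x+th$, $t\in[0,1]$. This segment may meet $D_F$ only in a null set, so we perturb it. For $w$ in a small ball $B_\varepsilon$ with $\varepsilon\ll\|h\|$, Fubini together with Rademacher's theorem shows that for almost every such $w$, the point $x+w+th$ lies in $D_F$ for almost every $t\in[0,1]$. Along such a perturbed segment, $t\mapsto F(x+w+th)$ is Lipschitz, so
\[
F(x+w+h)-F(x+w)=\int_0^1 F'(x+w+th)h\,dt .
\]
Now write $F'(x+w+th)h=\frac{1}{t}F'(x+w+th;\,th)$ and apply (iii) with displacement $k=w+th$. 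This gives
\[
F'(x+k)k-F'(x;k)=O(\|k\|^{1+\gamma}).
\]
Here there is a mismatch: (iii) controls $F'(x+k)k$ with $k=w+th$, whereas the integrand is $F'(x+k)(th)$. The discrepancy is $F'(x+k)w$, which is bounded by $L\|w\|$. Similarly, $F'(x;k)-F'(x;th)$ is bounded by $L\|w\|$, using that $F'(x;\cdot)$ is Lipschitz and positively homogeneous, so $F'(x;th)=tF'(x;h)$.

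Dividing by $t$ makes these errors blow up near $t=0$. To handle this I would integrate only over $t\in[\delta,1]$ and estimate the piece $[0,\delta]$ crudely by $L\delta\|h\|$. I would then let $\varepsilon\to0$ first, using the continuity of $F$ and choosing $w$ along a suitable sequence, which removes the $w$-errors. This yields
\[
F(x+h)-F(x+\delta h)=(1-\delta)F'(x;h)+\int_\delta^1 O\bigl(t^{\gamma}\|h\|^{1+\gamma}\bigr)\,dt .
\]
Finally let $\delta\to0$ to obtain $F(x+h)-F(x)-F'(x;h)=O(\|h\|^{1+\gamma})$. The constant is uniform because the $O$ in (iii) is uniform over $\|k\|\le 2\|h\|$.

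\textbf{Step 2: \eqref{eq:def-gammassm}.} First take $V\in\partial_B F(x+h)$, so $V=\lim_j F'(x+h_j)$ with $x+h_j\in D_F$ and $h_j\to h$. By (iii),
\[
F'(x+h_j)h_j=F'(x;h_j)+O(\|h_j\|^{1+\gamma}).
\]
Passing to the limit, using the continuity of $F'(x;\cdot)$, gives $Vh=F'(x;h)+O(\|h\|^{1+\gamma})$. This relation is affine in $V$, so it extends to the convex hull $\partial F(x+h)$, which yields (ii). Combining (ii) with Step 1 through the same decomposition as in (i) $\Rightarrow$ (ii) then gives \eqref{eq:def-gammassm}.

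The delicate point is Step 1: the Fubini perturbation and the order of the limits in $\varepsilon$ and $\delta$. An alternative I would try first is the known result that semismoothness is equivalent to (iii) (Qi–Sun type), refined with explicit orders. This would yield (i) directly once \eqref{eq:cbd} is in hand.
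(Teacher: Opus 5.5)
Your proof is correct and follows the same route as the paper. You use the trivial chain (i)$\Rightarrow$(ii)$\Rightarrow$(iii), then get \eqref{eq:cbd} from (iii) by an integral along segments, and get \eqref{eq:def-gammassm} by passing to limits of Jacobians at points of $D_F$, taking convex hulls, and combining with \eqref{eq:cbd}; the paper obtains these two steps by citing Facchinei--Pang's Theorem 7.4.3 and Sun's Theorem 3.7, while you work them out (your Fubini perturbation in Step 1 is valid, but doing Step 2 first avoids it, since the a.e. derivative of $t\mapsto F(x+th)$ lies in $\partial F(x+th)h$ by Clarke's mean value theorem and upper semicontinuity, so (ii) with displacement $th$ bounds the integrand by $O(t^\gamma\|h\|^{1+\gamma})$ on the unperturbed segment).
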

\begin{proof}
	The implications (i)$\Rightarrow$(ii) and (ii)$\Rightarrow$(iii) are obvious. By the derivation of \cite[Theorem 7.4.3]{Facchinei2003} with the Lebesgue integral, we have (iii)$\Rightarrow$\eqref{eq:cbd}. With \cite[Theorem 3.7]{Sun2002}, we can get \eqref{eq:def-gammassm} from \eqref{eq:def-gammass2} and \eqref{eq:cbd}. Therefore, we have (iii)$\Rightarrow$(i). This completes the proof.
\end{proof}
For a univariate function \( f: \mathbb{R} \to \mathbb{R} \) and distinct points \( x, y \in \mathbb{R} \), define its divided difference as
\[
\delta_f(x, y) := \frac{f(x) - f(y)}{x - y}.
\]
The secant method uses this quantity in place of \( f'(x) \). The convergence analysis relies on the approximation properties of \( \delta_f \), given in the following lemma adapted from \cite{Potra1998}.
%
%
\begin{lemma}\label{lm:app}
	Suppose $f: \mathbb{R} \rightarrow \mathbb{R}$ is semismooth at $\bar{x} \in \mathbb{R}$. The lateral derivatives of $f$ at $\bar{x}$ are defined as
	
	\begin{equation}\label{def:laterald}
		\bar{d}^{-}:=-f^{\prime}(\bar{x} ;-1) \quad \text { and } \quad \bar{d}^{+}:=f^{\prime}(\bar{x} ; 1) .		
	\end{equation}
	
	Then, the following results hold:
	
	(i) the lateral derivatives $\bar{d}^{-}$and $\bar{d}^{+}$ exist, and
	$
	\partial_{B} f(\bar{x})=\left\{\bar{d}^{-}, \bar{d}^{+}\right\}
	$;
	
	(ii) it holds that
	\begin{equation}\label{eq:laterald_n_dd_o}
		\begin{array}{ll}
			\bar{d}^{-}-\delta_f(u, v)=o(1) & \text { for all } u \uparrow \bar{x}, v \uparrow \bar{x}, \\
			\bar{d}^{+}-\delta_f(u, v)=o(1) & \text { for all } u \downarrow \bar{x}, v \downarrow \bar{x};
		\end{array}	
	\end{equation}	
	moreover, if $f(\cdot)$ is $\gamma-$order semismooth at $\bar{x}$ for some $\gamma>0$, then
	
	\begin{equation}\label{eq:laterald_n_dd_O}
		\begin{aligned}
			& \bar{d}^{-}-\delta_f(u, v)=O\left(|u-\bar{x}|^\gamma+|v-\bar{x}|^\gamma\right) \quad \text { for all } u \uparrow \bar{x}, v \uparrow \bar{x}, \\
			& \bar{d}^{+}-\delta_f(u, v)=O\left(|u-\bar{x}|^\gamma+|v-\bar{x}|^\gamma\right) \quad \text { for all } u \downarrow \bar{x}, v \downarrow \bar{x} .
		\end{aligned}
	\end{equation}
\end{lemma}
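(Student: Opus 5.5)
Throughout, $f$ is locally Lipschitz near $\bar x$ with constant $L$, and is both G-semismooth and directionally differentiable at $\bar x$. We work directly from these facts and from the bound \eqref{eq:dd-bd}; for the $\gamma$-order case we use \eqref{eq:cbd} and \eqref{eq:def-gammassm} instead.

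\textbf{Part (i).} Existence of $\bar d^{\pm}$ is immediate from directional differentiability, since $f'(\bar x;\pm1)$ exist. Positive homogeneity then gives
\[
f'(\bar x;h)=\bar d^{+}h \quad (h>0), \qquad f'(\bar x;h)=\bar d^{-}h \quad (h<0).
\]
For the inclusion $\partial_B f(\bar x)\subseteq\{\bar d^-,\bar d^+\}$, take $x^k\in D_f$ with $x^k\to\bar x$ and $f'(x^k)\to V$, and set $h_k=x^k-\bar x$. The point $x^k$ must differ from $\bar x$, since otherwise $f$ would be differentiable at $\bar x$ and the claim would be trivial. Passing to a subsequence, we may assume $h_k>0$ (the case $h_k<0$ is symmetric). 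G-semismoothness with $f'(x^k)\in\partial f(x^k)$ gives $f(x^k)-f(\bar x)-f'(x^k)h_k=o(h_k)$, and \eqref{eq:dd-bd} gives $f(x^k)-f(\bar x)-\bar d^+h_k=o(h_k)$. Subtracting and dividing by $h_k$ yields $f'(x^k)\to\bar d^+$, so $V=\bar d^+$.

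For the reverse inclusion, we must produce differentiability points on each side whose derivatives converge to $\bar d^{\pm}$. By Rademacher's theorem, $D_f$ has full measure in $(\bar x,\bar x+\epsilon)$, so we may pick points $x^k\downarrow\bar x$ in $D_f$. The argument above then shows $f'(x^k)\to\bar d^+$. The left side, with $x^k\uparrow\bar x$, gives $\bar d^-$ in the same way. Hence $\partial_Bf(\bar x)=\{\bar d^-,\bar d^+\}$.

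\textbf{Part (ii).} I expect this to be the main step. Take $u\neq v$, both greater than $\bar x$. A naive estimate via \eqref{eq:dd-bd} gives
\[
f(u)-f(v)=\bar d^+(u-v)+o(|u-\bar x|+|v-\bar x|),
\]
and dividing by $u-v$ fails when $|u-v|\ll|u-\bar x|$. To avoid this, I will use the Lebesgue integral representation, valid because $f$ is absolutely continuous:
\[
f(u)-f(v)=\int_v^u f'(t)\,dt, \qquad\text{so}\qquad \delta_f(u,v)-\bar d^+=\frac{1}{u-v}\int_v^u\bigl(f'(t)-\bar d^+\bigr)\,dt .
\]
From part (i), $\sup\{|f'(t)-\bar d^+| : t\in D_f,\ \bar x<t<\bar x+\eta\}\to0$ as $\eta\downarrow0$. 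Indeed, if this failed, some sequence of differentiability points $t_k\downarrow\bar x$ would have $f'(t_k)$ staying away from $\bar d^+$, and by boundedness ($|f'|\le L$) a further subsequence would converge to an element of $\partial_B f(\bar x)$ other than $\bar d^+$; the first argument in (i) shows this is impossible. The integrand is therefore uniformly $o(1)$ on the interval between $u$ and $v$, and the average is $o(1)$. The left-sided statement is symmetric.

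\textbf{The $\gamma$-order case.} Here the same averaging argument works once we have a quantitative bound on the integrand: for $t\in D_f$ with $t>\bar x$,
\[
f'(t)-\bar d^+=O(|t-\bar x|^{\gamma}).
\]
This follows by repeating the first argument of (i) with $O(h^{1+\gamma})$ in place of $o(h)$, using \eqref{eq:cbd} and \eqref{eq:def-gammassm}, and dividing by $h=t-\bar x$. Integrating over the interval between $u$ and $v$, the average of $|t-\bar x|^\gamma$ is at most $\max(|u-\bar x|,|v-\bar x|)^\gamma\le |u-\bar x|^\gamma+|v-\bar x|^\gamma$. This gives \eqref{eq:laterald_n_dd_O}.
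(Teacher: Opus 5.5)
Your proof is correct, and it is more self-contained than the paper's. The paper cites Lemmas 2.2--2.3 of Potra--Qi--Sun for (i) and the $o(1)$ estimates. For the $\gamma$-order estimate it says only that the same process works once one uses the characterization \eqref{eq:def-gammass1}, i.e.\ $Vh-f'(\bar x;h)=O(|h|^{1+\gamma})$ uniformly over all $V\in\partial f(\bar x+h)$.

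Your part (i) is the standard argument. The real difference is in (ii), where the two routes use different tools to handle $\delta_f(u,v)$:
\begin{itemize}
\item The cited route (as the paper's appeal to \eqref{eq:def-gammass1} suggests) is most naturally a Lebourg--Clarke mean value argument. It places $\delta_f(u,v)$ in $\partial f(w)$ for a single $w$ between $u$ and $v$. One application of \eqref{eq:def-gammass1} with $h=w-\bar x$ and $|w-\bar x|\leqslant\max\{|u-\bar x|,|v-\bar x|\}$ then finishes. This explains why the paper needs the estimate for all Clarke elements at possibly nondifferentiable points.
\item You instead write $\delta_f(u,v)$ as the average of the a.e.\ derivative between $u$ and $v$ and bound the integrand pointwise. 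So you only need the estimate at points of $D_f$, which is in effect condition \eqref{eq:def-gammass2}. You derive it directly by subtracting \eqref{eq:cbd} from \eqref{eq:def-gammassm}.
\end{itemize}
The price of your route is invoking absolute continuity of Lipschitz functions and the uniform bound on $\sup_{t\in D_f\cap(\bar x,\bar x+\eta)}|f'(t)-\bar d^{+}|$. You handle that bound correctly. The compactness detour there is unnecessary, since the first argument of (i) already gives $f'(t_k)\to\bar d^{+}$ along any $t_k\downarrow\bar x$ in $D_f$. Alternatively, the $o(\cdot)$ in G-semismoothness is uniform in $V$ by definition.
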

\begin{proof}
	The results in part (i) and equation \eqref{eq:laterald_n_dd_o} in part (ii) follow directly from \cite[Lemma 2.2]{Potra1998} and \cite[Lemma 2.3]{Potra1998}. Using the equivalent characterization \eqref{eq:def-gammass1} of the $\gamma-$order semismoothness, we can derive \eqref{eq:laterald_n_dd_O} through a similar process as in \cite[Lemma 2.3]{Potra1998}.
\end{proof}

Let $G:\mathcal{X}\to\mathcal{Y}$ be a twice continuously differentiable mapping and let $K\subset \mathcal{Y}$ be a closed convex set. The tangent cone of $K$ at $y$ is denoted as $T_K(y)$ and defined as $T_K(y):=\{d \in \mathcal{Y}\mid \operatorname{dist}(y+t d, K)=o(t), t \geq 0\}$. The linearity space of a closed convex cone $C$ is denoted as $\operatorname{lin}(C)$, with $\operatorname{lin}(C)=C\cap(-C)$. The nondegeneracy of the constraint system $G(x)\in K$ is defined as follows.
\begin{definition}
	A feasible point $\bar{x}$ of constraint system $G(x)\in K$ is said to be constraint nondegenerate if
	\begin{equation}
		G'(\bar{x})\mathcal{X}+\operatorname{lin}\left( T_K(G(\bar{x})) \right)=\mathcal{Y}. 
	\end{equation}
\end{definition}
\begin{definition}
	A function $\Phi: \mathcal{O} \subseteq \mathcal{X} \rightarrow \mathcal{X}$ is said to be a locally Lipschitz homeomorphism near $x \in \mathcal{O}$ if there exists an open neighborhood $\mathcal{N} \subseteq \mathcal{O}$ of $x$ such that the restricted map $\Phi\mid_{\mathcal{N}}: \mathcal{N} \rightarrow \Phi(\mathcal{N})$ is Lipschitz continuous and bijective, and its inverse is also Lipschitz continuous.
\end{definition} 
\begin{proposition}\label{prop:in-fun-th}
	Let function $\Phi: \mathcal{O} \subseteq \mathcal{X} \rightarrow \mathcal{X}$ be a locally Lipschitz homeomorphism near $x \in \mathcal{O}$. Let $\gamma$ be a positive constant. Then there exists an open neighborhood $\mathcal{N} \subseteq \mathcal{O}$ of $x$ such that:
	
	(i) $\Phi$ is G-semismooth ($\gamma-$order G-semismooth) at some point $\bar{x} \in \mathcal{N}$ if and only if $\Phi^{-1}$, the local inverse mapping of $\Phi$ near $x_0$, is G-semismooth ($\gamma-$order G-semismooth) at $\bar{y}:=\Phi(\bar{x})$;
	
	(ii) $\Phi$ is directionally differentiable ($\gamma-$order directionally differentiable) at some point $\bar{x} \in \mathcal{N}$ if and only if $\Phi^{-1}$, the local inverse mapping of $\Phi$ near $x_0$, is directionally differentiable ($\gamma-$order directionally differentiable) at $\bar{y}:=\Phi(\bar{x})$.
\end{proposition}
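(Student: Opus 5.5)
Since $\Phi$ is a locally Lipschitz homeomorphism near $x$, I first fix an open neighborhood $\mathcal{N}$ of $x$ on which $\Phi|_{\mathcal{N}}$ is bi-Lipschitz onto the open set $\Phi(\mathcal{N})$. Let $L$ be a Lipschitz constant for both $\Phi$ and $\Phi^{-1}$ there, so that $\|h\|/L\le\|\Phi(\bar x+h)-\Phi(\bar x)\|\le L\|h\|$. The plan is to prove only the "only if" directions. The converses then follow by symmetry, because $\Phi^{-1}$ is itself a locally Lipschitz homeomorphism near $\Phi(x)$ with inverse $\Phi$.

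\emph{Part (ii).} Suppose $\Phi$ is directionally differentiable at $\bar x$.
\begin{enumerate}
\item By \eqref{eq:dd-bd}, $\Phi(\bar x+h)=\bar y+\Phi'(\bar x;h)+o(\|h\|)$, where $\Phi'(\bar x;\cdot)$ is positively homogeneous and Lipschitz.
\item The map $\Phi'(\bar x;\cdot)$ is a bi-Lipschitz bijection of $\mathcal{X}$. To see this, dividing the bi-Lipschitz estimate by $t$ and letting $t\downarrow0$ gives $\|h-h'\|/L\le\|\Phi'(\bar x;h)-\Phi'(\bar x;h')\|$, so the map is injective with Lipschitz inverse. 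Surjectivity comes from a degree (or invariance-of-domain) argument: for a given $k$, take limits of $(\Phi^{-1}(\bar y+tk)-\bar x)/t$ along a subsequence.
\item I then show $(\Phi^{-1})'(\bar y;k)=[\Phi'(\bar x;\cdot)]^{-1}(k)$. Write $h(t)=\Phi^{-1}(\bar y+tk)-\bar x$, which satisfies $\|h(t)\|\le Lt$. Step 1 gives $tk=\Phi'(\bar x;h(t))+o(t)$. Applying the Lipschitz inverse and homogeneity yields $h(t)/t\to[\Phi'(\bar x;\cdot)]^{-1}(k)$.
\item The $\gamma$-order version is the same computation with $o(\cdot)$ replaced by $O(\|\cdot\|^{1+\gamma})$, using $\|h\|\asymp\|k\|$.
\end{enumerate}

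\emph{Part (i).} Suppose $\Phi$ is G-semismooth at $\bar x$.
\begin{enumerate}
\item First I need the inverse-function rule for Clarke Jacobians. On $\mathcal{N}$, every $V\in\partial\Phi(x')$ should be nonsingular, and every $W\in\partial\Phi^{-1}(y')$ should be of the form $W=V^{-1}$ for some $V\in\partial\Phi(\Phi^{-1}(y'))$.
  \begin{itemize}
  \item For the B-subdifferential this is straightforward. $\Phi^{-1}$ is differentiable at $y'$ exactly when $\Phi$ is differentiable at $\Phi^{-1}(y')$ with invertible derivative, and the bounds $\|V^{-1}\|\le L$ survive limits. (To prove this, I would use that Lipschitz maps are differentiable a.e., and that bi-Lipschitz maps preserve null sets, Rademacher.)
  \item For the convex hull, I expect to invoke Clarke's inverse function theorem (Clarke 1990, Thm.~7.1.1), after shrinking $\mathcal{N}$ so that $\partial\Phi$ consists of nonsingular maps near $\bar x$.
  \end{itemize}
\item Given this, set $y=\bar y+k$, $h=\Phi^{-1}(y)-\bar x$, and $W=V^{-1}$ with $V\in\partial\Phi(\bar x+h)$. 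Then
\[
\Phi^{-1}(\bar y+k)-\Phi^{-1}(\bar y)-Wk=V^{-1}\bigl(Vh-(\Phi(\bar x+h)-\Phi(\bar x))\bigr).
\]
Since $\|V^{-1}\|$ is uniformly bounded and $\|h\|\asymp\|k\|$, the right-hand side is $o(\|k\|)$, or $O(\|k\|^{1+\gamma})$ in the $\gamma$-order case.
\end{enumerate}

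\textbf{Main obstacle.} The hardest step is handling the convex hull in the inverse rule, namely the claim $\partial\Phi^{-1}(y')=\{V^{-1}\}$. Inverses of convex combinations are not convex combinations of inverses, and nonsingularity of all of $\partial\Phi$ does not follow from being a bi-Lipschitz homeomorphism in general. To get around this, I would try to argue G-semismoothness using only the B-subdifferential, which has the exact inverse correspondence above. I would then pass to convex combinations on the side of $\Phi^{-1}$, writing $W=\sum\mu_iW_i$ with $W_i=V_i^{-1}$ and summing the residual estimate term by term. The point is that each term is $V_i^{-1}(\cdots)$ with uniformly bounded $V_i^{-1}$, and convexity is only used linearly, on the $\Phi^{-1}$ side. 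For this to work, the G-semismoothness hypothesis must be applied with each $V_i\in\partial_B\Phi(\bar x+h)$, which is allowed since $\partial_B\subset\partial$.
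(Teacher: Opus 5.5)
Your proposal is correct; it differs from the paper mainly in being self-contained. For part (ii) you follow the paper's route. The paper cites Kummer for $(\Phi^{-1})'(\bar y;\cdot)=\Psi^{-1}$, where $\Psi:=\Phi'(\bar x;\cdot)$, and cites Pang--Sun--Sun for $\Psi$ being a Lipschitz homeomorphism. You prove both facts directly; your cluster-point argument on $(\Phi^{-1}(\bar y+tk)-\bar x)/t$ already gives surjectivity, so no degree theory is needed. The $\gamma$-order estimate is then the same computation as the paper's. Your version, $\|h-\Psi^{-1}(k)\|=\|\Psi^{-1}(\Psi(h))-\Psi^{-1}(k)\|\le L\|\Psi(h)-k\|$, is the correct form of the step. \eqref{eq:imfunthm_cbd} instead writes the equality $x-\bar x-\Psi^{-1}(y-\bar y)=-\Psi^{-1}(y-\bar y-\Psi(x-\bar x))$, which would require $\Psi^{-1}$ to be linear.

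For part (i) the paper only cites Meng--Sun--Zhao and says the $\gamma$-order case goes as in the strong case. Here you should delete your first plan rather than treat it as an obstacle, because the Clarke inverse function theorem route is unavailable. A positively homogeneous piecewise linear homeomorphism of $\mathbb{R}^2$ can have both $I$ and $-I$ among its linear pieces. This gives $0\in\partial\Phi(0)$ and also $0\in\partial\Phi^{-1}(0)$, and the latter element is not of the form $V^{-1}$.

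Your fallback is the right proof, for two reasons.
\begin{enumerate}
\item The identity $\partial_B\Phi^{-1}(y')=\{V^{-1}: V\in\partial_B\Phi(\Phi^{-1}(y'))\}$, with $\|V^{-1}\|\le L$, holds exactly and pointwise. Any Jacobian of $\Phi$ or $\Phi^{-1}$ at a point of differentiability satisfies $\|Vh\|\ge\|h\|/L$, so no Rademacher or null-set argument is needed.
\item The residual $\Phi^{-1}(\bar y+k)-\Phi^{-1}(\bar y)-Wk$ is affine in $W$. Write $W=\sum_i\mu_iV_i^{-1}$ and use that the G-semismoothness bound at $\bar x$ is uniform over $\partial_B\Phi(\bar x+h)\subseteq\partial\Phi(\bar x+h)$. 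Together with $\|h\|\le L\|k\|$, this gives $o(\|k\|)$, respectively $O(\|k\|^{1+\gamma})$.
\end{enumerate}
The paper's approach buys brevity. Yours buys a complete argument, in particular for the $\gamma$-order G-semismooth case, which the paper leaves to analogy.
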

\begin{proof}
	The inverse function theorems on G-semismoothness and directional differentiability are sourced from \cite[Theorem 2]{Meng2005}. On $\gamma-$order G-semismoothness, the derivation is same as the strong G-semismoothness case in \cite[Theorem 2]{Meng2005}. Our current focus lies in establishing the $\gamma-$order directionally differentiability.
	
	Note that $\Phi$ is also a locally Lipschitz homeomorphism near $\bar{x}$. We denote $\Psi(\cdot):=\Phi'(\bar{x};\cdot)$. Shrinking the open neighborhood $\mathcal{N}$ if necessary, for any $y\in\Phi(\mathcal{N})$ and $ y\to\bar{y}$, we have 
	\begin{equation}\label{eq:imfunthm_cbd}
		\begin{aligned}
			& \Phi^{-1}(y)-\Phi^{-1}(\bar{y})-\left( \Phi^{-1}\right) '(\bar{y};y-\bar{y})\\
			=&x-\bar{x}-\Psi^{-1}(y-\bar{y})\\
			=&-\Psi^{-1}\left( y-\bar{y}-\Psi(x-\bar{x}) \right) \\
			=&-\Psi^{-1}\left( \Phi(x)-\Phi(\bar{x}) -\Phi'(\bar{x};x-\bar{x}) \right)\\
			=&O\left(\Phi(x)-\Phi(\bar{x}) -\Phi'(\bar{x};x-\bar{x}) \right),
		\end{aligned}
	\end{equation}
	where $x=\Phi^{-1}(y)$. The relation between $\left( \Phi^{-1}\right) '(\bar{y};\cdot)$ and $\Psi^{-1}(\cdot)$ is from \cite[Lemma 2]{Kummer1992}. The last equation is guaranteed by the Lipschitz homeomorphism property of $\Psi(\cdot)$ \cite[Theorem 6]{Pang2003}. 
	
	The other direction of the theorem on the $\gamma-$order directional differentiability can be obtained by revising the derivation \eqref{eq:imfunthm_cbd}. This completes the proof of the proposition.
\end{proof}
\section{A level set method with secant iterations for \eqref{eq:lscnnm}}
In this section, we first demonstrate several properties of the value function $\varphi(\lambda)$ in the univariate equation \eqref{eq:une} and the level set method from \cite{Li2018}. Then we prove the nonsingularity of the Clarke generalized Jacobian of a class of univariate functions including $\varphi(\lambda)$. Finally, we analyze the semismoothness of $\varphi(\lambda)$ and develop a secant method for \eqref{eq:une}.

\subsection{A level set method with the value function $\varphi(\lambda)$}
The dual problem of the regularized problem \eqref{eq:nnrls} takes the following form:
\begin{equation}\label{eq:nnrls-d}\tag{LS($\lambda$)-D}
	\min _{y \in \mathbb{R}^{p},Z\in\bR^{m\times n}}\frac{1}{2}\|y\|^2-b^Ty\quad\text{s.t. } \cA^*y= \lambda Z, \|Z\|_2\leqslant 1 .
\end{equation}
The KKT conditions of \eqref{eq:nnrls} and \eqref{eq:nnrls-d} are:
\begin{equation}\label{eq:nnrls-kkt}
	Z\in \partial \|X\|_*,\quad y-b+\cA X=0,\quad \text{and }\cA^*y=\lambda Z. 
\end{equation}
The dual problem can be equivalently formulated as 
\begin{equation}\label{eq:nnrls-d2}
	\min _{y \in \mathbb{R}^{p}}\frac{1}{2}\|y\|^2-b^Ty\quad \text{ s.t. } \lambda^{-1}y\in Q ,
\end{equation}
where $Q:=\{z\in\bR^p\mid \cA^*z\in\partial\|0\|_*\}=\{z\in\bR^p\mid \|\cA^*z\|_2\leqslant1\}$. 
We denote the primal-dual solutions of \eqref{eq:nnrls} and \eqref{eq:nnrls-d} as $(X(\lambda),y(\lambda),Z(\lambda))$. With \eqref{eq:nnrls-d2}, the dual solution $y(\lambda)$ can be explicitly expressed as $\lambda\Pi_Q(\lambda^{-1}b)=\Pi_{\lambda Q}(b)$. Hence, the value function $\varphi(\lambda)$ satisfies 
\begin{equation}\label{eq:valuef}
	\varphi(\lambda)=\|\cA X(\lambda)-b\|=\|y(\lambda)\|=\|\Pi_{\lambda Q}(b)\|.	
\end{equation}

To justify the level set method for \eqref{eq:lscnnm}, we demonstrate the following properties of this value function $\varphi(\lambda)$. 
\begin{proposition} \label{prop:valuefunction}
	Suppose that $0<\|\cA^*b\|_2$. The following statements hold:
	\begin{enumerate}[(i)]
		\item For any $\lambda\geqslant0$, the quantity $\varphi(\lambda)$ is independent of the particular choice of $X(\lambda)$;
		\item For any $\lambda>\|\cA^*b\|_2$, $y(\lambda)=b$ and $X(\lambda)=0$;
		\item For any $0< \lambda_1<\lambda_2\leqslant \|\cA^*b\|_2$, $\varphi(\lambda_1)<\varphi(\lambda_2)$.
	\end{enumerate}
\end{proposition}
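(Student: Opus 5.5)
The plan is to work entirely through the dual characterization \eqref{eq:valuef}, namely $\varphi(\lambda)=\|y(\lambda)\|$ with $y(\lambda)=\Pi_{\lambda Q}(b)$, together with the KKT system \eqref{eq:nnrls-kkt}.

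For (i), note that the dual problem \eqref{eq:nnrls-d2} has a strongly convex objective over a closed convex feasible set $\lambda Q$. Hence $y(\lambda)$ is unique. Any primal solution $X(\lambda)$ paired with the dual solution satisfies $y(\lambda)=b-\cA X(\lambda)$ by \eqref{eq:nnrls-kkt}, because strong duality holds for \eqref{eq:nnrls} (finite-valued convex objective). So $\|\cA X(\lambda)-b\|=\|y(\lambda)\|$ is the same for every choice of $X(\lambda)$. At $\lambda=0$ this is the usual fact that least-squares residuals $\cA X-b$ are unique, since $\cA X$ is the projection of $b$ onto $\operatorname{range}\cA$; this also agrees with $\Pi_{\{0\}\cdot Q}$ interpreted suitably.

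For (ii), if $\lambda>\|\cA^*b\|_2$, then $\|\cA^*(\lambda^{-1}b)\|_2<1$, so $b\in\lambda Q$ and $y(\lambda)=\Pi_{\lambda Q}(b)=b$. The KKT condition then gives $\cA X(\lambda)=0$ and $Z=\lambda^{-1}\cA^*b$ with $\|Z\|_2<1$. Since $Z\in\partial\|X\|_*$ and $\|Z\|_2<1$, the identity $\langle Z,X\rangle=\|X\|_*$ together with $\langle Z,X\rangle\le\|Z\|_2\|X\|_*$ forces $\|X\|_*=0$, that is, $X(\lambda)=0$.

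For (iii), write $y_i=y(\lambda_i)$. Since $Q$ is convex and contains $0$, we have $\lambda_1Q\subseteq\lambda_2Q$. I would first show $y_1\neq y_2$. Because $\lambda_1<\|\cA^*b\|_2$, we get $b\notin\lambda_1Q$, so $y_1$ lies on the boundary, $\|\cA^*y_1\|_2=\lambda_1$. If $y_2=y_1$, then $y_2$ lies in the interior region $\|\cA^*y\|_2<\lambda_2$ of $\lambda_2Q$. The projection condition $b-y_2\in N_{\lambda_2Q}(y_2)$ then forces $b=y_2$ (normal cone at a point with strict inequality in the defining constraint is $\{0\}$; more carefully, $b-y_2=\mu\cA Z'$ with $\mu\geq 0$ and $\mu>0$ only if the constraint is active). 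That would give $\|\cA^*b\|_2=\lambda_1$, a contradiction.

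Next I compare norms using monotonicity of projections onto scaled sets. One option is the dual objective $g(y)=\tfrac12\|y\|^2-b^Ty=\tfrac12\|y-b\|^2-\tfrac12\|b\|^2$. Because $y_1\in\lambda_2Q$ and the minimizer $y_2$ is unique with $y_1\neq y_2$, we get $\|y_2-b\|<\|y_1-b\|$. Turning this into $\|y_2\|>\|y_1\|$ needs more structure, so I would instead use the scaling property $y(\lambda)=\lambda\Pi_Q(b/\lambda)$. Set $t=\lambda_1/\lambda_2\in(0,1)$. Then $ty_2\in\lambda_1Q$, so the projection inequality at $y_1$ gives $\langle b-y_1,ty_2-y_1\rangle\le0$. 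Similarly $y_1\in\lambda_2Q$ gives $\langle b-y_2,y_1-y_2\rangle\le0$.

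The expected obstacle is combining these two inequalities into a strict inequality $\|y_1\|<\|y_2\|$. My first attempt is as follows. Multiply the second inequality by $t$ and add it to the first, which yields an inequality involving $\|y_1\|^2$, $t\|y_2\|^2$, $(1+t)\langle y_1,y_2\rangle$, and $(1-t)\langle b,y_1\rangle$. Then use $\langle b-y_1,y_1\rangle\ge0$; this holds because $0\in\lambda_1Q$ and the variational inequality with test point $0$ gives $\langle b-y_1,0-y_1\rangle\le 0$. A Cauchy–Schwarz step then gives $\|y_1\|\le\|y_2\|$, and strictness should follow from $y_1\neq y_2$ together with the boundary information.

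If that algebra does not close, the fallback is the standard fact that $\lambda\mapsto\|\lambda\Pi_Q(b/\lambda)\|$ is nondecreasing, via the derivative of the convex function $\tfrac12\operatorname{dist}^2$. Strictness would then come from the fact that $\varphi$ is constant on an interval only if $y$ is constant there, which the argument above rules out for $\lambda\le\|\cA^*b\|_2$.
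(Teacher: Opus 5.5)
\textbf{Your proof of (iii) has a gap in the norm comparison.} Your parts (i) and (ii) are correct, and so is the step $y_1\neq y_2$ in (iii). The trouble is the inequality $\|y_1\|\le\|y_2\|$.

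With your test points ($ty_2\in\lambda_1Q$ for $y_1$, and $y_1\in\lambda_2Q$ for $y_2$), adding the first inequality to $t$ times the second gives $\|y_1\|^2+t\|y_2\|^2-2t\langle y_1,y_2\rangle\le(1-t)\langle b,y_1\rangle$. The cross coefficient is $2t$, not $1+t$, and a $b$-term survives on the right. The fact $\langle b-y_1,y_1\rangle\ge 0$ bounds $\langle b,y_1\rangle$ from \emph{below}, so it cannot remove that term.

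In fact your three inequalities do not imply $\|y_1\|\le\|y_2\|$ at all. Take $b=(1,0)$, $y_1=(\frac12,\frac14)$, $y_2=(\frac12,0)$ and $t=\frac34$. Both variational inequalities hold with equality, and $\langle b-y_1,y_1\rangle=\frac{3}{16}\ge 0$. Yet $\|y_1\|^2=\frac{5}{16}>\frac14=\|y_2\|^2$.

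Your fallback does not repair this as written. It names a tool rather than giving an argument. Its strictness claim, that $\varphi$ is constant on an interval only if $y$ is, is precisely what would need proof.

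\textbf{The fix is to use the scaling in both directions.} Test the projection inequality for $y_2$ at $t^{-1}y_1=(\lambda_2/\lambda_1)y_1\in\lambda_2Q$ rather than at $y_1$. After multiplying by $t$ this reads $\langle b-y_2,y_1-ty_2\rangle\le 0$. Adding your first inequality $\langle b-y_1,ty_2-y_1\rangle\le 0$ eliminates $b$ entirely:
\[\langle y_2-y_1,ty_2-y_1\rangle\le 0, \quad\text{i.e.}\quad \|y_1\|^2-(1+t)\langle y_1,y_2\rangle+t\|y_2\|^2\le 0.\]
Cauchy--Schwarz then gives $(\|y_1\|-\|y_2\|)(\|y_1\|-t\|y_2\|)\le 0$, so $\|y_1\|\le\|y_2\|$. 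If $\|y_1\|=\|y_2\|$, the same inequality forces $\langle y_1,y_2\rangle=\|y_1\|\|y_2\|$, hence $y_1=y_2$, which your distinctness step already rules out.

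This is the same inequality behind the relaxation $(\|z\|-\frac{\lambda}{\lambda_0}\|y_0\|)(\|z\|-\|y_0\|)\le 0$ in the proof of Theorem~\ref{th:bound}. The paper itself only cites prior work for this proposition. A primal alternative is to add $f_{\lambda_1}(X(\lambda_1))\le f_{\lambda_1}(X(\lambda_2))$ and $f_{\lambda_2}(X(\lambda_2))\le f_{\lambda_2}(X(\lambda_1))$. This gives $\|X(\lambda_1)\|_*\ge\|X(\lambda_2)\|_*$, and hence $\varphi(\lambda_1)\le\varphi(\lambda_2)$.
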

The proof of Proposition \ref{prop:valuefunction} can be found in \cite[Proposition 1]{Li2018} and \cite[Proposition 3.1, Proposition 3.3]{Li2024}. Note that $Q$ is not necessarily a polyhedral set. The bound $\|\cA^*b\|_2$ is actually the gauge function $\Upsilon(b\mid Q)$.

With the monotonicity of the value function $\varphi(\lambda)$, the equivalence between the least-squares constrained problem \eqref{eq:lscnnm} and the univariate nonlinear equation \eqref{eq:une} is established, which justifies the following level set method. 
\begin{algorithm}[!h]
	\caption{A level set method for \eqref{eq:lscnnm}}
	\label{alg:levelset}
	\begin{algorithmic}[1]
		\Require Initial points $\lambda^{-1}$, $\lambda^0$, $\lambda^1$ and bounds $\lambda_{\text{lo}}$, $\lambda_{\text{up}}$ such that $0 \leqslant \lambda_{\text{lo}} \leqslant \lambda^{-1} < \lambda^0 \leqslant \lambda_{\text{up}} \leqslant \|\cA^*b\|_2$ and $\lambda_{\text{lo}}\leqslant\lambda^1\leqslant\lambda_{\text{up}}$.
		\For{$k=1,2\ldots$}
		\State Compute
		\begin{equation}\label{eq:subprob_of_level}
        \tag{LS($\lambda^k$)-P}
			X^{k}=\argmin_{X\in\bR^{m\times n}}\lambda^k\|X\|_* +\frac{1}{2}\|\cA X-b\|^2.
		\end{equation}
		\State Compute $\varphi(\lambda^k)=\|\cA X^k-b\|$.
		\State Update \( \lambda^{k+1} \) using either a bisection or a secant step.
		\EndFor
		
	\end{algorithmic}
\end{algorithm}

Algorithm \ref{alg:levelset} was originally proposed in \cite{Li2018} and introduces a novel approach compared to traditional level set methods \cite{Aravkin2019, VanDenBerg2009, VandenBerg2011}. The key advantage lies in the subproblem \eqref{eq:subprob_of_level}, which is easier to solve. Moreover, its regularized least-squares structure enables the use of a proximal generation method, as discussed in the next section.
\subsection{Nonsingularity of the Clarke generalized Jacobian $\partial \varphi(\lambda)$}
In this subsection, we investigate the nonsingularity of the Clarke generalized Jacobian $\partial \varphi(\lambda)$, which plays an important role in the superlinear convergent rate of the secant method. 

We consider a broader univariate function 
\begin{equation}\label{eq:pjsq}
	\psi(\lambda):=\|\lambda\Pi_{C}(\lambda^{-1}b)\|=\|\Pi_{\lambda C}(b)\|,
\end{equation}
where $b\in\bR^p$ is a constant vector and $C\subset\bR^p$ is a closed convex set such that $0\in C$ and $0<\Upsilon(b\mid C)<+\infty$. The value function $\varphi(\lambda)$ is an example of \eqref{eq:pjsq}. In addition, according to the Lipschitz continuity of the projection onto the closed convex set $C$ and the Rademacher theorem, $\psi(\lambda)$ is differentiable almost everywhere on $(0,\Upsilon(b\mid C))$.

We first state some technical results for $\psi(\lambda)$.
\begin{lemma}\label{lm:neq0}
	For any $\lambda\in(0,\Upsilon(b\mid C))$, we have $$\Pi_{\lambda C}(b)\neq0,\quad b-\Pi_{\lambda C}(b)\neq0\quad\text{and}\quad\left\langle\Pi_{\lambda C}(b),  b-\Pi_{\lambda C}(b)\right\rangle>0.$$
\end{lemma}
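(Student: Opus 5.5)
Write $p:=\Pi_{\lambda C}(b)$ and fix $\lambda\in(0,\Upsilon(b\mid C))$. The plan is to use two facts: the gauge is homogeneous, $\Upsilon(x\mid \lambda C)=\lambda^{-1}\Upsilon(x\mid C)$, and the projection onto the closed convex set $\lambda C$ is characterized by the variational inequality $\langle b-p,\,c-p\rangle\leqslant 0$ for all $c\in\lambda C$. Throughout I use that $0\in C$, hence $0\in\lambda C$, and that $\lambda C$ is closed and convex because $\lambda>0$.

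\textbf{Step 1: $b-p\neq0$.} Suppose $b-p=0$. Then $b\in\lambda C$, so $\Upsilon(b\mid C)\leqslant\lambda$. This contradicts $\lambda<\Upsilon(b\mid C)$.

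\textbf{Step 2: $\langle p,b-p\rangle\geqslant0$.} Take $c=0\in\lambda C$ in the variational inequality. This gives $\langle b-p,-p\rangle\leqslant0$, which is the claim.

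\textbf{Step 3: $p\neq0$.} Suppose $p=0$. The variational inequality becomes $\langle b,c\rangle\leqslant0$ for all $c\in\lambda C$, and by scaling, for all $c\in C$. I will show this is incompatible with $\Upsilon(b\mid C)<+\infty$. Finiteness of the gauge gives some $\mu\geqslant0$ with $b\in\mu C$. If $\mu=0$, then $b=0$, which is impossible since $\Upsilon(0\mid C)=0$ would contradict $\Upsilon(b\mid C)>0$. If $\mu>0$, take $c=b/\mu\in C$. Then $\|b\|^2/\mu\leqslant0$, so $b=0$, the same contradiction. Hence $p\neq0$.

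\textbf{Step 4: strict positivity of $\langle p,b-p\rangle$.} This is the main obstacle, since Step 2 only gives a nonnegative inner product. Suppose $\langle p,b-p\rangle=0$. The variational inequality gives $\langle b-p,c\rangle\leqslant\langle b-p,p\rangle=0$ for all $c\in\lambda C$. The point $b$ lies in $tC$ for some $t$ with $\Upsilon(b\mid C)\leqslant t<\infty$; the case $t=\Upsilon(b\mid C)$ is covered by closedness of $C$ when the infimum is attained, and otherwise I take any $t$ slightly larger. By convexity and $0\in C$, we have $sC\subseteq tC$ for $s\leqslant t$, so the sets $sC$ are nested.

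Now set $c=(\lambda/t)\,b\in\lambda C$, which is admissible because $t>0$. Substituting gives $\langle b-p,b\rangle\leqslant0$. Combined with $\langle b-p,p\rangle=0$, this yields $\|b-p\|^2=\langle b-p,b\rangle-\langle b-p,p\rangle\leqslant0$. So $b=p$, contradicting Step 1. This proves strict positivity.

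This argument does not even need $\lambda<\Upsilon(b\mid C)$ beyond what Step 1 uses. I will double-check that this choice of $c$ is legitimate, which rests only on $b\in tC$ and $\lambda>0$, and that it handles the sign correctly. If the argument holds up, Step 4 also yields Step 3 directly, since $p=0$ would force $\langle p,b-p\rangle=0$.
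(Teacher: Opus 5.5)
Your proof is correct and follows essentially the same route as the paper. Both arguments plug the test points $0$ and a positive multiple of $b$ lying in $\lambda C$ into the projection variational inequality: the paper uses $\lambda\Upsilon(b\mid C)^{-1}b$, you use $(\lambda/t)b$ for any $t>0$ with $b\in tC$, and your Step 1 matches the paper's. Taking an arbitrary such $t$ spares you from relying on attainment of the gauge infimum, and your remark that Step 4 already covers the case $p=0$ (making Step 3 redundant) is a valid minor streamlining.
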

\begin{proof}
	Since $\Upsilon(b\mid C)>0$ and $0\in C$, it follows that $b\neq0$. We will prove the above three inequalities by contradiction.
	
	If $\Pi_{\lambda C}(b)=0$, then \[\left\langle \Pi_{\lambda C}(b)-\lambda {\Upsilon(b\mid C)}^{-1}b,b-\Pi_{\lambda C}(b) \right\rangle=-\lambda {\Upsilon(b\mid C)}^{-1}\|b\|^2<0. \]
	However, since $\lambda {\Upsilon(b\mid C)}^{-1}b\in \lambda C$, the definition of projection yields  
	\begin{equation}\label{eq:proj}
		\left\langle \Pi_{\lambda C}(b)-\lambda {\Upsilon(b\mid C)}^{-1}b,b-\Pi_{\lambda C}(b) \right\rangle\geqslant0.
	\end{equation}
	This contradiction shows the first inequality.
	
	If $b-\Pi_{\lambda C}(b)=0$, then $b\in\lambda C$, which contradicts  $\lambda<\Upsilon(b\mid C)$. Therefore, the second inequality holds.
	
	According to $0\in C$ and the projection property, we have $\left\langle\Pi_{\lambda C}(b),  b-\Pi_{\lambda C}(b)\right\rangle\geqslant0$. If $\left\langle\Pi_{\lambda C}(b),  b-\Pi_{\lambda C}(b)\right\rangle=0$,  then \[
	\left\langle \Pi_{\lambda C}(b)-\lambda {\Upsilon(b\mid C)}^{-1}b,b-\Pi_{\lambda C}(b) \right\rangle
	=-\lambda {\Upsilon(b\mid C)}^{-1}\|b-\Pi_{\lambda C}(b)\|^2<0,
	\]which contradicts  the property of projection \eqref{eq:proj}. This completes the proof. 
\end{proof}
The following theorem gives bounds for the gradient of $\psi(\lambda)$ at its differentiable points.
\begin{theorem}\label{th:bound}
	If $\lambda_0\in(0,\Upsilon(b\mid C)) $ is a differentiable point, we have \begin{equation}\label{eq:bounds}
		\psi'(\lambda_0)\in\left[\frac{\left\langle \Pi_{\lambda_0 C}(b),b-\Pi_{\lambda_0 C}(b) \right\rangle^2 }{\lambda_0 \|\Pi_{\lambda_0 C}(b)\|\|b-\Pi_{\lambda_0 C}(b)\|^2},\frac{1}{\lambda_0}\|\Pi_{\lambda_0 C}(b)\|\right] .	
	\end{equation}	
\end{theorem}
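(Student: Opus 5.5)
Throughout, write $p:=\Pi_{\lambda_0 C}(b)$ and $r:=b-p$; by Lemma \ref{lm:neq0} both are nonzero. For $\lambda$ near $\lambda_0$ set $s:=\lambda/\lambda_0$, $q:=\Pi_{\lambda C}(b)$, and define the error $e$ by $q=sp+e$. The plan is to use only the variational inequality characterizing projections, evaluated at two cleverly chosen test points, and then pass to the limit $s\downarrow 1$. Because $\psi$ is differentiable at $\lambda_0$, the one-sided limit $s\downarrow1$ already gives $\psi'(\lambda_0)$, so there is no need to control $s\uparrow 1$.

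\textbf{Two projection inequalities and size of $e$.} Since $C$ is convex, $q/s\in\lambda_0C$ and $sp\in\lambda C$. The projection inequalities at these points are
\[
\langle b-p,\,q/s-p\rangle\leqslant 0 \quad\text{and}\quad \langle b-q,\,sp-q\rangle\leqslant 0 .
\]
Since $q/s-p=e/s$, the first inequality reads $\langle r,e\rangle\leqslant 0$. Multiplying the first by $s$ and adding the second gives $\langle q-p,\,q-sp\rangle\leqslant 0$. Because $q-p=(s-1)p+e$, this is
\[
\|e\|^2\leqslant -(s-1)\langle p,e\rangle .
\]
Consequently $\|e\|\leqslant |s-1|\,\|p\|$, and for $s>1$ we also get $\langle p,e\rangle\leqslant -\|e\|^2/(s-1)\leqslant 0$.

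\textbf{Upper bound.} For $s>1$, expand
\[
\|q\|^2=s^2\|p\|^2+2s\langle p,e\rangle+\|e\|^2\leqslant s^2\|p\|^2+\|e\|^2\Bigl(1-\tfrac{2s}{s-1}\Bigr)\leqslant s^2\|p\|^2 .
\]
Hence $\psi(\lambda)\leqslant (\lambda/\lambda_0)\psi(\lambda_0)$ for $\lambda>\lambda_0$. Then $\psi(\lambda)-\psi(\lambda_0)\leqslant \frac{\lambda-\lambda_0}{\lambda_0}\psi(\lambda_0)$, and dividing by $\lambda-\lambda_0$ and letting $\lambda\downarrow\lambda_0$ yields $\psi'(\lambda_0)\leqslant \|p\|/\lambda_0$.

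\textbf{Lower bound.} Since $e=O(s-1)$, I pick a sequence $s_k\downarrow1$ along which $e_k/(s_k-1)\to w$. Then
\[
\lambda_0\psi'(\lambda_0)=\lim\frac{\|q_k\|-\|p\|}{s_k-1}=\|p\|+\frac{\langle p,w\rangle}{\|p\|}.
\]
The limit direction $w$ inherits three constraints.
\begin{itemize}
\item From $\|e\|^2\leqslant -(s-1)\langle p,e\rangle$: dividing by $(s-1)^2$ gives $\|w\|^2\leqslant -\langle p,w\rangle=:\alpha$.
\item From $\langle r,e\rangle\leqslant 0$: $\langle r,w\rangle\leqslant 0$.
\item Rewriting the second projection inequality as $\langle r-(s-1)p-e,\,e\rangle\geqslant 0$ gives $\langle r,e\rangle\geqslant (s-1)\langle p,e\rangle+\|e\|^2$. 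Dividing by $s-1$ and letting $s\downarrow 1$ gives $\langle r,w\rangle\geqslant 0$.
\end{itemize}
Hence $\langle r,w\rangle=0$. Let $p_\perp:=p-\frac{\langle p,r\rangle}{\|r\|^2}r$. Orthogonality to $r$ gives $\alpha=-\langle p_\perp,w\rangle\leqslant\|p_\perp\|\,\|w\|\leqslant \|p_\perp\|\sqrt{\alpha}$. Therefore
\[
\alpha\leqslant \|p_\perp\|^2=\|p\|^2-\frac{\langle p,r\rangle^2}{\|r\|^2}.
\]
Substituting,
\[
\lambda_0\psi'(\lambda_0)=\|p\|-\frac{\alpha}{\|p\|}\geqslant \frac{\langle p,r\rangle^2}{\|p\|\,\|r\|^2},
\]
which is the left end of \eqref{eq:bounds}.

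The main obstacle is this last step. It requires extracting the two-sided information $\langle r,w\rangle=0$ from the two projection inequalities, and then bounding the worst case of $\langle p,w\rangle$ under the quadratic constraint $\|w\|^2\leqslant-\langle p,w\rangle$. The rest is bookkeeping once the test points $q/s$ and $sp$ are chosen.
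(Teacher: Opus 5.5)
Your proof is correct, and it takes a genuinely different route from the paper.

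\textbf{The paper's route.} The paper works from the left, $\lambda\uparrow\lambda_0$, and builds global comparison functions on all of $(0,\lambda_0]$. It describes every admissible value of $\Pi_{\lambda S}(b)$ over closed convex $S\ni 0$ with $\Pi_{\lambda_0 S}(b)=y_0$ (the paper's $y_0$ is your $p$). It does so through the same two projection inequalities you obtain from the test points $q/s$ and $sp$. It then relaxes this description and solves the resulting norm minimization and nonconvex norm maximization exactly; the maximization needs a parametrization in $\beta$ and an explicit maximizer. This gives $\psi_1(\lambda)=\frac{\lambda}{\lambda_0}\|y_0\|\leqslant\psi(\lambda)\leqslant\psi_2(\lambda)$ with equality at $\lambda_0$, and the derivative is then sandwiched.

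\textbf{Your route.} You work from the right and only to first order.
\begin{itemize}
\item Your upper bound $\psi(\lambda)\leqslant\frac{\lambda}{\lambda_0}\psi(\lambda_0)$ for $\lambda>\lambda_0$ mirrors the paper's $\psi_1$. Both say that $\psi(\lambda)/\lambda$ is nonincreasing.
\item For the lower bound you blow up $e/(s-1)\to w$. In the limit, the two one-sided inequalities on $\langle r,e\rangle$ pinch to $\langle r,w\rangle=0$. This equality holds only approximately at finite $\lambda$, and the paper's relaxation discards it.
\item With that equality, the paper's convex maximization collapses to a single Cauchy--Schwarz step.
\item Your extremal direction $w=-p_\perp$ is the first-order version of the paper's maximizer: $z_2(1)=\frac{\lambda}{\lambda_0}y_0+\bigl(1-\frac{\lambda}{\lambda_0}\bigr)p_\perp$, i.e.\ $e=(1-s)p_\perp$.
\end{itemize}

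\textbf{What each approach buys.} The paper's construction gives non-asymptotic bounds valid on the whole interval $(0,\lambda_0]$. Yours is shorter and more transparent. Because your argument applies to every cluster point $w$, it also places both right Dini derivatives of $\psi$ in the interval at every $\lambda_0\in(0,\Upsilon(b\mid C))$, whether or not $\psi$ is differentiable there. This is slightly more than the statement asks for. Since your endpoint expressions are identical to the paper's, the subsequent Clarke-Jacobian theorem goes through unchanged with your version.
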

\begin{proof}	
	To obtain the bounds of the derivative $\psi'(\lambda_0)$, we construct two functions $\psi_1(\lambda)$, $\psi_2(\lambda)$  s.t. $ \psi_1(\lambda)\leqslant\psi(\lambda)\leqslant\psi_2(\lambda)$ on $(0,\lambda_0)$ and $\psi_1(\lambda_0)=\psi(\lambda_0)=\psi_2(\lambda_0)$ to estimate the bounds of $\psi'(\lambda)$.
	
	We denote $y_0:=\Pi_{\lambda_0 C}(b)$ for convenience. To construct $\psi_1$ and $\psi_2$, we consider the two programs where $\lambda\in(0,\lambda_0]$ is a parameter.
	\begin{equation}\label{eq:bounds_prog1}
		\begin{aligned}
			\min _{z\in\bR^p}\text { or } \max _{z\in\bR^p}  &\quad  \|z\| \\
			\text { s.t. } & \exists \text { closed convex set } 0 \in S \subseteq \mathbb{R}^p, \\
			& \Pi_{\lambda_0 S}(b)={y_0}, \Pi_{\lambda S}(b)=z.
		\end{aligned}
	\end{equation}
	Note that $\Pi_{\lambda C}(b)$ is a feasible point for both minimization and maximization. The optimal values of these two programs could be used as the bounds of $\psi(\lambda)$.

The program \eqref{eq:bounds_prog1} is equivalent to the following program.
	\begin{equation}\label{eq:bounds_prog2}
		\begin{aligned}
			\min _{z\in\bR^p} \text { or } \max _{z\in\bR^p}&\quad  \|z\| \\
			\text { s.t. } & \langle z-\frac{\lambda}{\lambda_0} {y_0}, b-{y_0}\rangle \leqslant 0,\\&  \langle z-\frac{\lambda}{\lambda_0}{y_0}, z-b\rangle \leqslant 0, \\
			& \langle z, z-b\rangle \leqslant 0.
		\end{aligned}
	\end{equation}
	To show the equivalence between \eqref{eq:bounds_prog1} and \eqref{eq:bounds_prog2}, we only need to show that the feasible sets are the same.
	If $z$ is a feasible point of \eqref{eq:bounds_prog2}, it is also a feasible set of \eqref{eq:bounds_prog1} with closed convex $S:=\operatorname{conv}\left\lbrace 0,\frac{1}{\lambda_0}y_0,\frac{1}{\lambda}z\right\rbrace $. For a feasible point $z$ of \eqref{eq:bounds_prog1}, three inequalities in \eqref{eq:bounds_prog2} hold because of the two projections in \eqref{eq:bounds_prog1}. 
	
	For the minimization in \eqref{eq:bounds_prog2}, we consider the following relaxation,
	\begin{equation}\label{eq:bounds_min}
		\min_{z\in\bR^p} \|z\|\quad\text{ s.t. }(\|z\|-\frac{\lambda}{\lambda_0}\|y_0\|)(\|z\|-\|y_0\|)\leqslant0.
	\end{equation}
	Since $\lambda\leqslant\lambda_0$, $\frac{\lambda}{\lambda_0}\|y_0\|$ is the minimal value of \eqref{eq:bounds_min} (actually $z_1=\frac{\lambda}{\lambda_0}y_0$ is a minimal point of \eqref{eq:bounds_prog2}). Therefore, we have 
	\begin{equation}\label{eq:lowbd}
		\psi_1(\lambda):=\frac{\lambda}{\lambda_0}\|y_0\|\leqslant \|\Pi_{\lambda C}(b)\|=\psi(\lambda) \text{ for any } \lambda\in(0,\lambda_0].
	\end{equation}
	
	For maximization in \eqref{eq:bounds_prog2}, we consider the relaxation below,
	\begin{equation}\label{eq:bound_max}
		\begin{aligned}
			\max_{z\in\bR^p}\quad & \|z\|\\
			\text{ s.t. }&\langle z-\frac{\lambda}{\lambda_0} {y_0}, b-{y_0}\rangle \leqslant 0, \\&\langle z-\frac{\lambda}{\lambda_0}{y_0}, z-y_0\rangle \leqslant 0.
		\end{aligned}
	\end{equation}
	The convex maximization \eqref{eq:bound_max} can be equivalently reformulated as the following program
	\begin{equation}\label{eq:bound_max_beta}
		\max_{\beta\in[\frac{\lambda}{\lambda_0},1]} \nu(\beta),
	\end{equation}
where 
	\begin{equation}\label{eq:bound_max_beta2}
		\begin{aligned}
		\nu(\beta)=&	\max_{z\in\bR^p}\quad  \|z\|\\
			\text{ s.t. }&\langle z-\frac{\lambda}{\lambda_0} {y_0}, b-{y_0}\rangle \leqslant 0, \\ & \langle z-\frac{\lambda}{\lambda_0}{y_0}, z-\beta y_0\rangle = 0.
		\end{aligned}
	\end{equation}
It is obvious that $\nu(\frac{\lambda}{\lambda_0})=\frac{\lambda}{\lambda_0}\|y_0\|$. For $\beta\in\left( \frac{\lambda}{\lambda_0},1 \right]$, we can also give an explicit solution as follows.

	In the program \eqref{eq:bound_max_beta2}, with the equality constraint, the objective $\|z\|$ can be transferred as $\|z-\frac{\lambda}{\lambda_0}y_0\|$ for \[\|z-\frac{\lambda}{\lambda_0}y_0\|^2=\frac{\beta\lambda_0-\lambda}{\beta\lambda_0+\lambda}\|z\|^2+\left( \frac{\lambda^2}{\lambda_0^2}-\frac{2\beta\lambda^2}{(\beta\lambda_0+\lambda)\lambda_0} \right) \|y_0\|^2,\]
	which could be derived from the following identity
	\[ \|z-\frac{\lambda}{\lambda_0}y_0\|^2= \|z-\frac{\lambda}{\lambda_0}y_0\|^2-\frac{2\lambda}{\lambda+\beta\lambda_0}\langle z-\frac{\lambda}{\lambda_0}{y_0}, z-\beta y_0\rangle.\]  
We claim that the maximizer of \eqref{eq:bound_max_beta2} is \[z_2(\beta)=\beta y_0-\frac{\beta\lambda_0-\lambda}{\lambda_0}\frac{\left\langle y_0,b-y_0 \right\rangle }{\|b-y_0\|^2}(b-y_0).\]
	The feasibility of $z_2(\beta)$ can be verified. To verify the optimality, we first combine the two constraints in \eqref{eq:bound_max_beta2} as follows
	\[\left\langle  z-\frac{\lambda}{\lambda_0} {y_0},  z-\frac{\lambda}{\lambda_0} {y_0}-\frac{\beta\lambda_0-\lambda}{\lambda_0}y_0+\frac{\beta\lambda_0-\lambda}{\lambda_0}\frac{\left\langle y_0,b-y_0 \right\rangle }{\|b-y_0\|^2}(b-{y_0})\right\rangle \leqslant0.\]
	Then the optimality is followed by
	\[\begin{aligned}
		\left\|z-\frac{\lambda}{\lambda_0} {y_0}\right\|^2\leqslant&\left\langle z-\frac{\lambda}{\lambda_0} {y_0},\frac{\beta\lambda_0-\lambda}{\lambda_0}y_0-\frac{\beta\lambda_0-\lambda}{\lambda_0}\frac{\left\langle y_0,b-y_0 \right\rangle }{\|b-y_0\|^2}(b-{y_0})\right\rangle \\
		\leqslant&\left\|z-\frac{\lambda}{\lambda_0} {y_0}\right\|\left\|\frac{\beta\lambda_0-\lambda}{\lambda_0}y_0-\frac{\beta\lambda_0-\lambda}{\lambda_0}\frac{\left\langle y_0,b-y_0 \right\rangle }{\|b-y_0\|^2}(b-{y_0}) \right\|. 
	\end{aligned}
	\]
	It is obvious that $\|z-\frac{\lambda}{\lambda_0}y_0\|$ takes its maximal value at $z_2(\beta)$, and the corresponding maximal value in \eqref{eq:bound_max_beta2} is
	\[\|z_2(\beta)\|=\sqrt{\beta^2\|y_0\|^2+\frac{\lambda^2-\beta^2\lambda_0^2}{\lambda_0^2}\frac{\left\langle y_0,b-y_0 \right\rangle^2 }{\|b-y_0\|^2}}.\]
	With $\|y_0\|^2\geqslant \frac{\left\langle y_0,b-y_0 \right\rangle^2 }{\|b-y_0\|^2}$, we know that the maximal value of \eqref{eq:bound_max_beta} is taken at $\beta=1$. Similarly, $z_2(1)$ is actually feasible for \eqref{eq:bounds_prog2}.

	 The maximal value $\|z_2(1)\|$ is the upper bound function $\psi_2(\lambda)$ that we construct: \begin{equation}\label{eq:uppbd}
     \begin{aligned}
        \psi_2(\lambda)&:=\sqrt{\|y_0\|^2+\frac{\lambda^2-\lambda_0^2}{\lambda_0^2}\frac{\left\langle y_0,b-y_0 \right\rangle^2 }{\|b-y_0\|^2}}\\&\geqslant \|\Pi_{\lambda C}(b)\|=\psi(\lambda) \text{ for any } \lambda\in(0,\lambda_0]. 
     \end{aligned}
	\end{equation}
	
	Moreover, we have $\psi_1(\lambda_0)=\psi(\lambda_0)=\psi_2(\lambda_0)$.
	
	With the bound function $\psi_1(\lambda)$ and $\psi_2(\lambda)$, we can estimate the bounds of the derivative $\psi'(\lambda_0)$:
	\[
	\psi'(\lambda_0)=\lim_{\lambda\uparrow \lambda_0}\frac{\psi(\lambda)-\psi(\lambda_0)}{\lambda-\lambda_0}\in[\psi_2'(\lambda_0),\psi_1'(\lambda_0)].
	\]
	The explicit form of the above interval is 
	\[
	[\psi_2'(\lambda_0),\psi_1'(\lambda_0)]=\left[\frac{\left\langle \Pi_{\lambda_0 C}(b),b-\Pi_{\lambda_0 C}(b) \right\rangle^2 }{\lambda_0 \|\Pi_{\lambda_0 C}(b)\|\|b-\Pi_{\lambda_0 C}(b)\|^2},\frac{1}{\lambda_0}\|\Pi_{\lambda_0 C}(b)\|\right]. 
	\] The fraction in the lower bound is well defined according to Lemma \ref{lm:neq0}. This completes the proof.
\end{proof}
\begin{theorem}
	For any $\bar \lambda\in(0,\Upsilon(b\mid C))$, the Clarke generalized Jacobian $\partial\psi(\bar\lambda)\subset\bR_{++}$.
\end{theorem}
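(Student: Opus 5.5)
The plan is to combine the pointwise derivative bounds of Theorem \ref{th:bound} with continuity of the projection, and then pass to the convex hull. Since $\psi$ is Lipschitz near $\bar\lambda$, the Clarke generalized Jacobian satisfies $\partial\psi(\bar\lambda)=\operatorname{conv}\partial_B\psi(\bar\lambda)$. Every element of $\partial_B\psi(\bar\lambda)$ is a limit $\lim_k\psi'(\lambda_k)$ along differentiable points $\lambda_k\to\bar\lambda$. It therefore suffices to exhibit a uniform positive lower bound for $\psi'$ on differentiable points in a neighborhood of $\bar\lambda$. The convex hull of a set of reals bounded below by some $c>0$ stays in $[c,\infty)\subset\bR_{++}$.

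First, fix a compact interval $[\bar\lambda-\epsilon,\bar\lambda+\epsilon]\subset(0,\Upsilon(b\mid C))$. I will verify that $\lambda\mapsto\Pi_{\lambda C}(b)=\lambda\Pi_C(\lambda^{-1}b)$ is continuous on this interval. This holds because $\Pi_C$ is nonexpansive and $\lambda\mapsto\lambda^{-1}b$ is continuous for $\lambda>0$. Writing $y(\lambda):=\Pi_{\lambda C}(b)$, each of the functions
\[
\lambda\mapsto \langle y(\lambda),b-y(\lambda)\rangle,\quad \|y(\lambda)\|,\quad \|b-y(\lambda)\|^2,\quad \lambda
\]
is then continuous on the interval. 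By Lemma \ref{lm:neq0}, the first is strictly positive and the second and third are nonzero at every point of the interval.

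Define
\[
g(\lambda):=\frac{\langle y(\lambda),b-y(\lambda)\rangle^2}{\lambda\|y(\lambda)\|\,\|b-y(\lambda)\|^2}.
\]
This $g$ is continuous and strictly positive on the compact interval, so it attains a minimum $c>0$ there. By Theorem \ref{th:bound}, every differentiable point $\lambda$ in the interval satisfies $\psi'(\lambda)\ge g(\lambda)\ge c$. Taking limits gives $\partial_B\psi(\bar\lambda)\subset[c,\infty)$. The upper bound in Theorem \ref{th:bound}, together with Lipschitz continuity, shows these limits are finite and that $\partial_B\psi(\bar\lambda)$ is nonempty. Convexifying then yields $\partial\psi(\bar\lambda)\subset[c,\infty)\subset\bR_{++}$.

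The main point requiring care is the first step. A pointwise positive lower bound at each differentiable point is not enough, because B-subdifferential limits could still tend to $0$. The bound must be uniform near $\bar\lambda$, which is why continuity of $y(\lambda)$ and the compactness argument are needed. Also, Lemma \ref{lm:neq0} must hold on a whole neighborhood, not only at $\bar\lambda$. This is guaranteed because $(0,\Upsilon(b\mid C))$ is open.
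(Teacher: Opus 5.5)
Your proof is correct and follows the paper's argument. Continuity of $\lambda\mapsto\Pi_{\lambda C}(b)$ makes the lower bound of Theorem~\ref{th:bound} a continuous, positive function of the base point, which gives a uniform positive lower bound on $\psi'$ at differentiable points near $\bar\lambda$; this bound then survives the B-subdifferential limits and the convex hull. Your single uniform treatment is slightly cleaner than the paper's case split, whose differentiable case asserts $\partial\psi(\bar\lambda)=\{\psi'(\bar\lambda)\}$, and that would in general require strict differentiability.
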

\begin{proof}
	If $\bar{\lambda}$ is a differentiable point, we have $\partial\psi(\bar{\lambda})=\{\psi'(\bar{\lambda})\}$, whose unique element is a positive number according to Lemma \ref{lm:neq0} and Theorem \ref{th:bound}.
	
	For nondifferentiable $\bar{\lambda}\in(0,\Upsilon(b\mid C))$, we can find a closed neighborhood $\mathcal{B}$ of $\bar{\lambda}$ such that $0<m_\psi\leqslant\psi_2'(\lambda)\leqslant\psi'(\lambda)\leqslant\psi_1'(\lambda)\leqslant M_\psi<+\infty$ for any differentiable point $\lambda\in\mathcal{B}$. The existence of such uniform bounds $m_\psi$ and $M_\psi$ is due to the continuity of $\psi_1'(\lambda)$ and $\psi_2'(\lambda)$. By the definition of the Clarke generalized Jacobian, any element $v\in\partial\psi(\bar{\lambda})$ satisfies $v\in[m_\psi, M_\psi]\subset\bR_{++}$. This completes the proof.
\end{proof}
\begin{corollary}
	For any $\lambda\in(0,\|\cA^*b\|_2)$, the Clarke generalized Jacobian $\partial\varphi(\lambda)\subset\bR_{++}$.
\end{corollary}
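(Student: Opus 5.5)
The corollary is a direct specialization of the preceding theorem to $C=Q$, where
\[
Q=\{z\in\bR^p\mid \|\cA^*z\|_2\leqslant 1\}.
\]
The plan is to check that $Q$ satisfies the standing hypotheses imposed on $C$ in \eqref{eq:pjsq}, identify $\Upsilon(b\mid Q)$ with $\|\cA^*b\|_2$, and then invoke the theorem through the identity $\varphi=\psi$ from \eqref{eq:valuef}.

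\textbf{Step 1: $Q$ is admissible.} $Q$ is the preimage of the closed unit spectral-norm ball under the linear map $\cA^*$. It is therefore closed and convex, and it contains $0$.

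\textbf{Step 2: computing the gauge.} For $\lambda>0$ we have $b\in\lambda Q$ if and only if $\|\cA^*b\|_2\leqslant\lambda$. Taking the infimum over such $\lambda\geqslant 0$ gives $\Upsilon(b\mid Q)=\|\cA^*b\|_2$. The one point to handle is that $\lambda=0$ cannot be in the set: $0\cdot Q=\{0\}$, and $b\neq0$ because $\|\cA^*b\|_2>0$. Hence
\[
0<\Upsilon(b\mid Q)=\|\cA^*b\|_2<+\infty,
\]
where positivity is exactly the standing assumption $\|\cA^*b\|_2>0$, and finiteness holds because $\|\cA^*b\|_2$ is a finite number. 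This matches the remark after Proposition \ref{prop:valuefunction}.

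\textbf{Step 3: applying the theorem.} By \eqref{eq:valuef}, $\varphi(\lambda)=\|\Pi_{\lambda Q}(b)\|$ for every $\lambda>0$. This is the function $\psi$ of \eqref{eq:pjsq} with $C=Q$, and $\varphi$ is well defined by Proposition \ref{prop:valuefunction}(i). Because the two functions agree on the open interval $(0,\|\cA^*b\|_2)$, their Clarke generalized Jacobians coincide at every point of that interval; the Clarke Jacobian is determined by the function's values on a neighborhood. The preceding theorem then gives $\partial\varphi(\lambda)=\partial\psi(\lambda)\subset\bR_{++}$ for all $\lambda\in(0,\|\cA^*b\|_2)$.

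None of these steps carries real difficulty. The only place needing care is the gauge identification in Step 2, specifically excluding $\lambda=0$ so that $\Upsilon(b\mid Q)$ is strictly positive.
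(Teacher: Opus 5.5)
Your proposal is correct and takes the same approach as the paper. The paper's proof only notes that $\varphi$ is the instance $C=Q$ of \eqref{eq:pjsq} with $\|\cA^*b\|_2=\Upsilon(b\mid Q)$, and then invokes the preceding theorem; your Steps 1--3 spell out the routine checks the paper leaves implicit (that $Q$ is closed and convex with $0\in Q$, the gauge computation excluding $\lambda=0$, and that the Clarke Jacobian depends only on local values).
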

\begin{proof}
	The value function $\varphi(\lambda)$ is an example of \eqref{eq:pjsq}. And $\|\cA^*b\|_2=\Upsilon(b\mid Q)$. This completes the proof.
\end{proof}
\subsection{A secant method for the univariate equation \eqref{eq:une}}
This subsection develops a secant method for the univariate equation $\varphi(\lambda) = \varrho$ and analyzes its convergence. The analysis includes studying the semismoothness of $\varphi(\lambda)$.

Given two initial points $\lambda^{-1}$ and $\lambda^0$, the $k-$th iteration of the classic secant method for the univariate equation $\varphi(\lambda)=\varrho$ is as follows:
\begin{equation}\label{eq:sec1}
				\lambda^{k+1}=\lambda^k-\left(\delta_\varphi\left(\lambda^k, \lambda^{k-1}\right)\right)^{-1} \left( \varphi\left(\lambda^k\right)-\varrho\right)\quad \text{for $k=1,2,3....$} 
			\end{equation} 
The local convergence of the secant method for the semismooth function $\varphi(\lambda)$ is established in the following proposition, adapted from \cite[Theorem 3.2]{Potra1998}.  

\begin{proposition}\label{prop:sec_converge1}
	Suppose that $\varphi:\bR\to\bR$ is semismooth at the solution $\lambda^*$ s.t. $\varphi(\lambda^*)=\varrho$. Let $d^{-}$ and $d^{+}$ be the lateral derivatives of $\varphi$ at $\lambda^*$, as defined in \eqref{def:laterald}. If $d^{-}$ and $d^{+}$ are both positive (or negative). It holds that:
	
	(i) there is a neighborhood $\mathcal{U}$ of $\lambda^*$ such that for each $\lambda^{-1}, \lambda^0 \in \mathcal{U}$ the secant method \eqref{eq:sec1} is well defined and produces a sequence of iterations $\left\{\lambda^k\right\}$ s.t.
	$
\{\lambda^k\} \in \mathcal{U}
	$
	and $\left\{\lambda^k\right\}$ converges to $\lambda^*$ $3$-step Q-superlinearly, e.g. $|\lambda^{k+3}-\lambda^*|=o(|\lambda^k-\lambda^*|)$; 
	
	(ii) if
	$
	\alpha:=\frac{\left|d^{+}-d^{-}\right|}{\min \left\{\left|d^{+}\right|,\left|d^{-}\right|\right\}}<1,
	$
	then $\left\{\lambda^k\right\}$ is Q-linearly convergent with Q-factor $\alpha$;
	
	(iii) if $\varphi$ is $\gamma-$order semismooth at $\lambda^*$ with some $\gamma>0$, then  $|\lambda^{k+3}-\lambda^*|=O(|\lambda^k-\lambda^*|^{1+\gamma})$.
\end{proposition}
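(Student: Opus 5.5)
The plan is to reproduce the argument of \cite[Theorem 3.2]{Potra1998}, with Lemma \ref{lm:app} supplying the approximation properties of the divided differences. By Lemma \ref{lm:app}(i), $\partial_B\varphi(\lambda^*)=\{d^-,d^+\}$. Assume without loss of generality that $d^\pm>0$; otherwise replace $\varphi$ by $-\varphi$. Write $e_k:=\lambda^k-\lambda^*$. The secant step gives the error identity
\begin{equation*}
e_{k+1}=\left(\delta_\varphi(\lambda^k,\lambda^{k-1})\right)^{-1}\left(\delta_\varphi(\lambda^k,\lambda^{k-1})-\delta_\varphi(\lambda^k,\lambda^*)\right)e_k .
\end{equation*}
All three claims will be read off from this identity.

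\textbf{Well-definedness and lateral estimates.} The first step is to show that divided differences near $\lambda^*$ stay in a compact subset of $\bR_{++}$. Semismoothness gives $\varphi(\lambda)-\varphi(\lambda^*)=\varphi'(\lambda^*;\lambda-\lambda^*)+o(|\lambda-\lambda^*|)$, and the directional derivative equals $d^+(\lambda-\lambda^*)$ or $d^-(\lambda-\lambda^*)$ depending on the side. Hence for $u,v$ close to $\lambda^*$, $\delta_\varphi(u,v)$ lies within $o(1)$ of the interval $[\min(d^\pm),\max(d^\pm)]$. This holds even when $u$ and $v$ straddle $\lambda^*$, because then $\delta_\varphi(u,v)$ is a convex combination of $\delta_\varphi(u,\lambda^*)$ and $\delta_\varphi(\lambda^*,v)$. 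So the iteration is well defined, and the bracket in the identity is $O(1)$ times $e_k$. Next consider the case where $\lambda^k$ and $\lambda^{k-1}$ lie on the same side of $\lambda^*$. Then Lemma \ref{lm:app}(ii) shows that both divided differences are $d^\pm+o(1)$ with the same sign choice, so $|e_{k+1}|=o(|e_k|)$.

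\textbf{Three-step argument for (i).} This is the main obstacle: when consecutive iterates straddle $\lambda^*$, the two divided differences approximate different lateral derivatives, and only a contraction factor bounded by roughly $|d^+-d^-|/\min(d^\pm)$ is available. I will follow Potra's case analysis on the sign pattern of $(e_{k-1},e_k,e_{k+1},e_{k+2})$. The first sub-step is to show that, once $\mathcal{U}$ is small, $|e_{k+1}|\le c|e_k|$ for a uniform constant $c$. Inductively, this keeps the iterates in $\mathcal{U}$, after shrinking $\mathcal{U}$ to absorb $c$ via a nested-ball argument. The second sub-step uses the fact that among three consecutive steps some pair of consecutive iterates must lie on the same side. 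That pair's step is superlinear by the previous paragraph, and it is multiplied by at most two bounded factors $c$. This yields $|e_{k+3}|=o(|e_k|)$.

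\textbf{Parts (ii) and (iii).} For (ii), when the iterates alternate sides, the identity gives the factor $|d^\pm-d^\mp|/d^\mp+o(1)\le\alpha+o(1)$. When they lie on the same side, the factor is $o(1)$. Together these give Q-linear convergence with factor $\alpha$. For (iii), I replace each $o(1)$ above by the $O(|e_k|^\gamma+|e_{k-1}|^\gamma)$ bound from the $\gamma$-order version of Lemma \ref{lm:app}(ii). Combined with $|e_{k-1}|\le c|e_{k-2}|$, this turns the superlinear step of the three-step chain into $O(|e_k|^{1+\gamma})$, giving $|e_{k+3}|=O(|e_k|^{1+\gamma})$.
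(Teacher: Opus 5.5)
\textbf{Parts (i) and (ii).} Your route is the one the paper intends. The paper gives no argument of its own and simply invokes \cite[Theorem 3.2]{Potra1998} together with Lemma \ref{lm:app}. Your error identity, the convex-combination bound for straddling pairs, and the factor $\alpha+o(1)$ are all correct.

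Two points need filling in:
\begin{itemize}
\item The ``fact'' that one of three consecutive pairs lies on a single side is not automatic. It comes from a sign rule: for a straddling pair, $\lambda^{k+1}$ stays on the side of $\lambda^k$ exactly when $\lambda^{k-1}$ lies on the side with the larger lateral derivative. This holds for small $\mathcal{U}$ when $d^+\neq d^-$; if $d^+=d^-$, straddling steps are already superlinear. So two straddles in a row force a same-side pair next.
\item Invariance of $\mathcal{U}$ itself, rather than of a larger ball, follows from bracketing (a straddling step lands between $\lambda^{k-1}$ and $\lambda^k$) together with the contraction of same-side steps.
\end{itemize}

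\textbf{Part (iii): the step that fails.} Suppose the only same-side pair among $(\lambda^{k-1},\lambda^k)$, $(\lambda^k,\lambda^{k+1})$, $(\lambda^{k+1},\lambda^{k+2})$ is the first one. Then \eqref{eq:laterald_n_dd_O} gives only $|e_{k+1}|=O\big((|e_k|^\gamma+|e_{k-1}|^\gamma)|e_k|\big)$. Turning this into $O(|e_k|^{1+\gamma})$ requires $|e_{k-1}|=O(|e_k|)$. That is the reverse of the inequality $|e_j|\le c|e_{j-1}|$ you invoke, and it is false under superlinear convergence. The sign rule cannot exclude this case, because whether a same-side step crosses $\lambda^*$ is decided by higher-order terms, not by $d^\pm$.

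\textbf{The case really occurs.} Take $\varrho=0$, with $\varphi(\lambda)=2\lambda$ for $\lambda\ge 0$ and $\varphi(\lambda)=\lambda+\lambda^2g(\lambda)$ for $\lambda<0$. Here $g(\lambda)=\chi(\log\log(1/|\lambda|))$, where $\chi$ is smooth, periodic, and equal to $\pm1$ off short transition intervals.
\begin{itemize}
\item Since $\lambda^2g'(\lambda)=O(|\lambda|/\log(1/|\lambda|))$, $\varphi$ is strongly semismooth at $0$, with $d^-=1$ and $d^+=2$.
\item Two left iterates give $e_{k+1}\approx g\,e_ke_{k-1}$, so the iterates stay on the left while $g<0$.
\item At each switch of $g$ from $-1$ to $+1$ you get $e_{k-2},e_{k-1},e_k<0$ with $|e_k|=O(|e_{k-1}||e_{k-2}|)$.
\item Next comes a crossing $e_{k+1}\approx|e_k||e_{k-1}|>0$ onto the steeper side, followed by two straddles: $e_{k+2}\approx-e_{k+1}$ and $e_{k+3}\approx-e_{k+1}/3$.
\end{itemize}
Hence $|e_{k+3}|/|e_k|^2$ is of order $|e_{k-1}|/|e_k|$, which tends to $\infty$ along these indices.

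\textbf{What your argument does give.} In every case it yields $|e_{k+3}|=O(|e_{k-1}|^\gamma|e_k|)$, and hence $|e_{k+4}|=O(|e_k|^{1+\gamma})$. The three-step bound stated in (iii) cannot be obtained this way. By the example above it fails in general, so an index shift or additional assumptions are needed.
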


If $\varphi$ is differentiable at $\lambda^*$ with a nonzero derivative, we can establish an explicit rate of superlinear convergence.

\begin{proposition}\label{prop:sec_converge2}
	Let $\lambda^*$ be a solution to $\varphi(\lambda)=\varrho$. Let $\left\{\lambda^k\right\}$ be the sequence generated by the secant method \eqref{eq:sec1} to solve $\varphi(\lambda)=\varrho$. For $k \geq-1$, denote $e_k:=\lambda^k-\lambda^*$ and assume that $\left|e_k\right|>0$. Assume that $\partial \varphi\left(\lambda^*\right)$ is a singleton $\{v^*\}$ and $v^*\neq0$. We have the following results.
	
	(i) If $\varphi$ is semismooth at $\lambda^*$, the sequence $\left\{\lambda^k\right\}$ converges to $\lambda^*$ Q-superlinearly;
	
	(ii) Suppose that $\varphi$ is $\gamma-$order semismooth at $\lambda^*$ with some $\gamma>0$. For $k \geq-1$, denote $c_k:=\left|e_k\right| /\left(\left|e_{k-1}\right|\left|e_{k-2}\right|^\gamma\right)$. Then either one of the following two properties is satisfied: (1) $\left\{\lambda^k\right\}$ converges to $\lambda^*$ superlinearly with Q-order at least $(1+\sqrt{1+4\gamma}) / 2$; (2)$\left\{\lambda^k\right\}$ converges to $\lambda^*$ superlinearly with R-order at least $(1+\sqrt{1+4\gamma}) / 2$, and for any constant $\underline{C}>0$, there exists a subsequence $\left\{c_{i_k}\right\}$ satisfying $c_{i_k}<\underline{C} i_k^{-i_k}$.
\end{proposition}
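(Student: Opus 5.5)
The plan is to run the classical secant error recursion with the divided differences controlled through semismoothness, in the spirit of \cite{Potra1998}. Write $v^*$ for the unique element of $\partial\varphi(\lambda^*)$. Since $\partial_B\varphi(\lambda^*)\subseteq\partial\varphi(\lambda^*)=\{v^*\}$, Lemma \ref{lm:app}(i) gives $d^-=d^+=v^*$, so $\varphi$ is differentiable at $\lambda^*$ with $\varphi'(\lambda^*)=v^*\neq 0$. Hence $\alpha=0<1$ in Proposition \ref{prop:sec_converge1}. Therefore the iteration is well defined and converges (at least Q-linearly) for starting points near $\lambda^*$, and all subsequent estimates may be made along a convergent sequence.

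\textbf{Error recursion.} From \eqref{eq:sec1} and $\varphi(\lambda^*)=\varrho$ we have
\[
e_{k+1}=e_k-\frac{\varphi(\lambda^k)-\varphi(\lambda^*)}{\delta_\varphi(\lambda^k,\lambda^{k-1})}
=\frac{\delta_\varphi(\lambda^k,\lambda^{k-1})-\delta_\varphi(\lambda^k,\lambda^*)}{\delta_\varphi(\lambda^k,\lambda^{k-1})}\,e_k .
\]
The denominator tends to $v^*\neq0$, so the task reduces to bounding the difference of the two divided differences.
\begin{itemize}
\item \emph{Part (i).} The key claim is that $\delta_\varphi(u,w)\to v^*$ whenever $u,w\to\lambda^*$, with no sign restriction on $u-\lambda^*$ and $w-\lambda^*$. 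When both points lie on the same side of $\lambda^*$, this is \eqref{eq:laterald_n_dd_o}. When they lie on opposite sides, I write $\delta_\varphi(u,w)$ as a convex combination of $\delta_\varphi(u,\lambda^*)$ and $\delta_\varphi(\lambda^*,w)$; each of these tends to $v^*$ by \eqref{eq:dd-bd}. The numerator of the recursion is then $o(1)$, which gives $|e_{k+1}|=o(|e_k|)$, i.e.\ Q-superlinear convergence.
\item \emph{Part (ii), quantitative bound.} Using the $\gamma$-order estimates \eqref{eq:laterald_n_dd_O}, together with \eqref{eq:cbd} for the mixed-sign pairs (convex-combination argument as above, with $d^\pm=v^*$), I aim for
\[
|\delta_\varphi(\lambda^k,\lambda^{k-1})-\delta_\varphi(\lambda^k,\lambda^*)|=O\bigl(|e_{k-1}|^\gamma\bigr)+O\bigl(|e_k|^\gamma\bigr).
\]
This bound is compared against $v^*$, since both divided differences are within that order of $v^*$. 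It yields $|e_{k+1}|\le K|e_k|\,|e_{k-1}|^\gamma$, because $|e_k|\le|e_{k-1}|$ eventually by part (i). Equivalently, $c_{k+1}\le K$ is bounded.
\end{itemize}

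\textbf{Main obstacle: the order dichotomy.} The hardest step is extracting the order $(1+\sqrt{1+4\gamma})/2$ and establishing the stated dichotomy. Set $t_k=-\ln|e_k|$, so that
\[
t_{k+1}\ge t_k+\gamma t_{k-1}-\ln K .
\]
The characteristic root of this linear recursion is $\tau=(1+\sqrt{1+4\gamma})/2$, and a standard comparison gives $t_k\gtrsim C\tau^k$, which is the R-order claim. For the Q-order, I would examine $q_k=|e_{k+1}|/|e_k|^\tau$. Using $\tau^2=\tau+\gamma$ one obtains
\[
q_k\le c_{k+1}\,q_{k-1}^{-\gamma/\tau}\cdots .
\]
If the $c_k$ stay bounded below by some $\underline{C}\,k^{-k}$, then the sequence $q_k$ remains bounded, which gives alternative (1). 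Otherwise, the failure of such a lower bound along a subsequence is exactly alternative (2). This final bookkeeping follows the argument of \cite[Theorem 3.2]{Potra1998} and its refinement for $\gamma$-order semismoothness, and it is where I expect the most care to be needed.
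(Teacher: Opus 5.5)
Your route is the one behind the paper's citation chain (\cite{Potra1998}, \cite{Li2024}, \cite{Potra1989}), and most of it is sound. This covers $d^-=d^+=v^*$, the divided-difference error recursion, the convex-combination treatment of mixed-sign pairs, part (i), the bound $c_{k+1}\le K$, and the R-order estimate from $t_{k+1}\ge t_k+\gamma t_{k-1}-\ln K$.

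The gap is in the Q-order half of the dichotomy. Your relation is actually an identity, $q_k=c_{k+1}q_{k-1}^{-\beta}$ with $\beta:=\gamma/\tau=\tau-1$. Unrolling it gives
\[
\ln q_k=\sum_{j=0}^{k-k_0-1}(-\beta)^j\ln c_{k+1-j}+(-\beta)^{k-k_0}\ln q_{k_0}.
\]
The odd-$j$ terms turn small values of $c$ into large values of $q$. A lower bound $c_i\ge\underline{C}\,i^{-i}$ therefore gives only $\ln q_k=O(k\ln k)$ (for $\beta<1$), not boundedness.

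Concretely, take $c_k=1$ for even $k$ and $c_k=k^{-k}$ for odd $k$. Then $c_k\ge k^{-k}$ for all $k$, so alternative (2) fails (take $\underline{C}=1$). Yet along odd $k$ one has $\ln q_k=\beta k\ln k+\beta^2\ln q_{k-2}$. Hence $q_k\to\infty$ along odd $k$ as soon as $q_{k_0}\ge1$ for one odd $k_0$. So your claim that $c_k\ge\underline{C}k^{-k}$ keeps $q_k$ bounded is false. Moreover, under the reading $\limsup_k|e_{k+1}|/|e_k|^{\tau}<\infty$ of ``Q-order at least $\tau$'', the dichotomy could not follow from the error recursion at all.

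The statement must be read with the Ortega--Rheinboldt Q-order, i.e.\ $\limsup_k|e_{k+1}|/|e_k|^{p}<\infty$ for every $p<\tau$. The missing step is to play your two estimates against each other. Write $|e_{k+1}|/|e_k|^{p}=q_k|e_k|^{\tau-p}$. Use $|\ln c_i|\le M+i\ln i$ in the sum above, with $M=|\ln K|+|\ln\underline{C}|$, together with $t_k\ge C\tau^k$ from your R-order step. This gives
\[
\ln\frac{|e_{k+1}|}{|e_k|^{p}}=\ln q_k-(\tau-p)t_k\le O\bigl(\max\{1,\beta\}^k k^2\ln k\bigr)-(\tau-p)C\tau^k\to-\infty ,
\]
since $\max\{1,\beta\}<\tau$. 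This yields alternative (1) whenever (2) fails, which is the content the paper imports from \cite[Theorem 2.2, Corollary 3.1]{Potra1989}. The rest of your contrapositive argument then goes through as written.
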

\begin{proof}
	The result in (i) follows from \cite[Lemma 2.3]{Potra1998} and \cite[Proposition 5.2]{Li2024}. The results in (ii) follow from Lemma \ref{lm:app}, \cite[Proposition 5.2]{Li2024}, \cite[Theorem 2.2]{Potra1989}, and \cite[Corollary 3.1]{Potra1989}.
\end{proof}
Like Newton methods, secant methods achieve fast convergence when the Clarke generalized Jacobian is nonsingular and the function is semismooth at the optimal point. The nonsingularity condition was examined earlier; now we analyze the semismoothness of $\varphi(\lambda)$.
\begin{proposition}\label{prop:ss}
	For $Q=\{z\in\bR^p\mid \|\cA^*z\|_2\leqslant1\}$, the projection function $\Pi_Q(\cdot)$ is $\gamma-$order semismooth and the value function $\varphi(\lambda)=\|\lambda\Pi_Q(\lambda^{-1}b)\|$ is also $\gamma-$order semismooth on $(0,+\infty)$ with some $\gamma>0$.
\end{proposition}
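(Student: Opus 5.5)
The plan is to argue through semialgebraic geometry rather than through the spectral structure of $\epi\|\cdot\|_2$ directly. The key external tool is the result of Bolte, Daniilidis and Lewis on tame mappings. It says that a locally Lipschitz continuous mapping whose graph is semialgebraic is $\gamma$-order semismooth at every point, for some $\gamma>0$. Concretely, it is directionally differentiable, and both \eqref{eq:def-gammassm} and \eqref{eq:cbd} hold with an exponent $\gamma$ coming from a Łojasiewicz-type inequality. So the proof reduces to two checks. First, $\Pi_Q$ and $\varphi$ have semialgebraic graphs. Second, they are locally Lipschitz on the relevant open sets.

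\textbf{Step 1: $Q$ is semialgebraic.} The condition $\|\cA^*z\|_2\leqslant 1$ is equivalent to the matrix inequality $I_m-(\cA^*z)(\cA^*z)^T\succeq 0$. This is in turn equivalent to finitely many polynomial inequalities in $z$, namely nonnegativity of all principal minors. Since $\cA^*$ is linear, each minor is a polynomial in the coordinates of $z$. Hence $Q$ is a closed convex semialgebraic set.

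\textbf{Step 2: the graph of $\Pi_Q$ is semialgebraic.} Write the graph as
\[
\{(x,y)\mid y\in Q,\ \langle x-y,w-y\rangle\leqslant 0\ \ \forall w\in Q\}.
\]
This is a first-order formula over the reals built from semialgebraic data. By the Tarski--Seidenberg theorem (quantifier elimination), the graph is semialgebraic. Moreover $\Pi_Q$ is globally $1$-Lipschitz, so the Bolte--Daniilidis--Lewis theorem gives the $\gamma$-order semismoothness of $\Pi_Q$.

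\textbf{Step 3: transfer to $\varphi$.} By \eqref{eq:valuef}, on $(0,+\infty)$ we have
\[
\varphi(\lambda)=\|\lambda\Pi_Q(\lambda^{-1}b)\|,
\]
which is a composition of $\lambda\mapsto\lambda^{-1}b$, $\Pi_Q$, multiplication by $\lambda$, and the Euclidean norm. Each of these has a semialgebraic graph, and compositions and products of semialgebraic maps are semialgebraic. Hence the graph of $\varphi$ restricted to $(0,+\infty)$ is semialgebraic.

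Local Lipschitz continuity on $(0,+\infty)$ follows from the estimate
\[
\|\Pi_{\lambda Q}(b)-\Pi_{\mu Q}(b)\|\leqslant |\lambda-\mu|\,\|\Pi_Q(\lambda^{-1}b)\|+\mu\,\|\lambda^{-1}b-\mu^{-1}b\|.
\]
The right-hand side is bounded by a constant times $|\lambda-\mu|$ on any compact subinterval. Applying the same theorem to $\varphi$ then yields its $\gamma$-order semismoothness. Alternatively, one could combine the $\gamma$-order semismoothness of $\Pi_Q$ with a chain rule for semismooth maps, but the semialgebraic route avoids handling the norm at $0$. That is in any case harmless here, because $\Pi_{\lambda Q}(b)\neq 0$ by Lemma \ref{lm:neq0} whenever $\lambda<\|\cA^*b\|_2$.

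\textbf{Main obstacle.} The delicate point is making sure the cited tame-semismoothness result delivers exactly our notion of $\gamma$-order semismoothness. That notion requires both the G-semismooth estimate \eqref{eq:def-gammassm} and the $\gamma$-order directional differentiability \eqref{eq:cbd}. I would verify this through the equivalent characterization (iii), that is \eqref{eq:def-gammass2}, of the proposition in the preliminaries.

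In one dimension this reduces to a Puiseux-expansion argument. Near any $\bar\lambda$, a semialgebraic Lipschitz function is given on each side by a convergent Puiseux series in $(\lambda-\bar\lambda)^{1/q}$. The derivative along the differentiable points then differs from the one-sided derivative by $O(|\lambda-\bar\lambda|^{1/q})$, which gives the required order with $\gamma=1/q$ at that point.

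Finally, note what the statement actually asserts. It only claims "some $\gamma>0$", which is allowed to depend on the point. Its role is to feed Proposition \ref{prop:sec_converge1}(iii) at the solution $\lambda^*$, so no uniform exponent is needed.
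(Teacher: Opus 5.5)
Your proposal is correct. For $\Pi_Q$ it is the paper's argument: the LMI description makes $Q$, and hence $\Pi_Q$, semialgebraic. Bolte--Daniilidis--Lewis \cite[Remark 4]{Bolte2009} then gives $\gamma$-order semismoothness in the sense of characterization (iii), i.e.\ \eqref{eq:def-gammass2}, and the equivalence in the preliminaries upgrades this to (i).

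The difference is in the passage to $\varphi$. The paper invokes the chain rule for $\gamma$-order semismooth maps \cite[Theorem 19]{Fischer1997}. It composes the smooth map $\lambda\mapsto\lambda^{-1}b$, the $\gamma$-order semismooth $\Pi_Q$, multiplication by $\lambda$, and the Euclidean norm. You instead apply the semialgebraic machinery to $\varphi$ itself, using its semialgebraic graph and your local Lipschitz estimate, and you check \eqref{eq:def-gammass2} directly through one-sided Puiseux expansions.

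What each route buys:
\begin{itemize}
\item Your route is self-contained for the univariate part and needs no composition theorem. It uses only the semialgebraicity of $Q$ and the nonexpansiveness of $\Pi_Q$, not any semismoothness property of $\Pi_Q$.
\item The paper's route is shorter. It also makes explicit that $\varphi$ inherits the order of semismoothness of the underlying projection or solution map. That is exactly the mechanism reused in Proposition~\ref{prop:sss} to get $\gamma=1$ under constraint nondegeneracy.
\end{itemize}

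A minor point: your caveat about the norm at $0$ is unnecessary. The Euclidean norm is strongly semismooth everywhere, including at $0$. In any case $\Pi_{\lambda Q}(b)\neq 0$ for every $\lambda>0$, since for $\lambda\geqslant\|\cA^*b\|_2$ it equals $b\neq0$.
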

\begin{proof}
	With its linear matrix inequality representation, we know that $Q$ is a semialgebraic set. Then its projection $\Pi_Q(\cdot)$ is a semialgebraic function. From \cite[Remark 4]{Bolte2009}, we know that $\Pi_Q(\cdot)$ is $\gamma-$order semismooth in the sense of \eqref{eq:def-gammass2}. The $\gamma-$order semismoothness of the value function $\varphi(\lambda)$ is due to the composition of $\gamma-$order semismooth functions \cite[Theorem 19]{Fischer1997}. This completes the proof.
\end{proof}

Proposition \ref{prop:ss} establishes the existence of some $\gamma>0$. The following theorem provides sufficient conditions for the stronger result $\gamma=1$,  corresponding to the strong semismoothness of $\varphi(\lambda)$. To facilitate further discussion, we reformulate the dual of the nuclear norm regularized problem \eqref{eq:nnrls-d} and the primal problem \eqref{eq:nnrls} as the following conic programs:
\begin{equation}\label{eq:nnrls-dc}
	\min _{y \in \mathbb{R}^{p}}\frac{1}{2}\|y\|^2-b^Ty\quad\text{ s.t. } (\lambda,\cA^*y)\in\operatorname{epi}\|\cdot\|_2 ;
\end{equation}
\begin{equation}\label{eq:nnls-pc}
	\min _{(s, X) \in \mathbb{R} \times \mathbb{R}^{m \times n}}\lambda s+\frac{1}{2}\|\mathcal{A} X-b\|^2 \quad\text{ s.t. } (s, X) \in \operatorname{epi}\|\cdot\|_*.
\end{equation}
\begin{proposition}\label{prop:sss}
	Let $y(\lambda^*)$ be the optimal solution of \eqref{eq:nnrls-dc} with $\lambda=\lambda^*$. If $y(\lambda^*)$ is constraint nondegenerate to \eqref{eq:nnrls-dc}, i.e.,
	\begin{equation}\label{eq:cn_nnlsdc}
		(0,\cA^*\bR^p)+\operatorname{lin}\left( T_{\epi\|\cdot\|_2}(\lambda^*,\cA^*y(\lambda^*)) \right)=\bR\times\bR^{m\times n},
	\end{equation}
	then the value function $\varphi(\cdot)$ is strongly semismooth at $\lambda^*$.
\end{proposition}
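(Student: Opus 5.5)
The plan is to obtain strong semismoothness of $\varphi(\lambda)=\|y(\lambda)\|$ from strong semismoothness of the parametric solution map $\lambda\mapsto y(\lambda)$ near $\lambda^*$, via an implicit-function argument on the KKT system of \eqref{eq:nnrls-dc}. Since $\|\cdot\|$ is smooth away from the origin and $y(\lambda^*)=\Pi_{\lambda^*Q}(b)\neq0$ by Lemma \ref{lm:neq0}, the composition rule for strongly semismooth functions \cite[Theorem 19]{Fischer1997} then transfers the property to $\varphi$.

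First I write the KKT system of \eqref{eq:nnrls-dc} with multiplier $(s,X)$ for the cone $\epi\|\cdot\|_2$ (whose dual cone is $\epi\|\cdot\|_*$, up to sign conventions). Using the Moreau decomposition, this becomes a nonsmooth equation $F(y,s,X;\lambda)=0$:
\[
y-b+\cA X=0,\qquad (\lambda,\cA^*y)-\Pi_{\epi\|\cdot\|_2}\bigl((\lambda,\cA^*y)-(s,X)\bigr)=0 .
\]
The projection onto $\epi\|\cdot\|_2$ is strongly semismooth; this is the known result for projections onto the epigraph of matrix norms (Ding, Sun and Toh). Hence $F$ is strongly semismooth jointly in $(y,s,X,\lambda)$. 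A KKT point exists at $\lambda^*$ because the dual is strongly convex and Slater holds since $\lambda^*>0$.

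Second, I use constraint nondegeneracy \eqref{eq:cn_nnlsdc}. Together with the strong convexity of $\tfrac12\|y\|^2$, which gives the strong second-order sufficient condition automatically since the Hessian is the identity, the standard characterization (Sun 2006 for SDP-type cones, extended to $\epi\|\cdot\|_2$ by Ding et al.) says that every element of $\partial_{(y,s,X)}F$ at the KKT point is nonsingular. Nondegeneracy also gives uniqueness of the multiplier $(s,X)$; uniqueness in $y$ comes from strong convexity. Clarke's inverse function theorem, or the semismooth implicit function theorem, then shows that $(y,s,X)(\lambda)$ is a locally Lipschitz function near $\lambda^*$. To get strong semismoothness of this implicit map, I apply Proposition \ref{prop:in-fun-th} with $\gamma=1$ to the augmented map $\Phi(\lambda,y,s,X):=(\lambda,F(y,s,X;\lambda))$. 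Nonsingularity of the partial generalized Jacobians makes $\Phi$ a locally Lipschitz homeomorphism, and $\Phi$ is strongly semismooth, so $\Phi^{-1}$ is too. Restricting $\Phi^{-1}$ to $(\lambda,0)$ yields strong semismoothness of $y(\cdot)$ at $\lambda^*$.

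The main obstacle is the second step: proving that nondegeneracy of the dual yields nonsingularity of every element of the Clarke Jacobian of $F$. The proof requires the explicit structure of $\partial\Pi_{\epi\|\cdot\|_2}$ at the relevant point. Given $V$ in this Jacobian and a direction $(\Delta y,\Delta s,\Delta X)$ in its kernel, I would use the identity $\langle \Delta y,\Delta y\rangle+\langle\text{cone-part}\rangle=0$, with the cone part nonnegative, to get $\Delta y=0$. Then I would use \eqref{eq:cn_nnlsdc} to show that the multiplier direction lies in the orthogonal complement of $(0,\cA^*\bR^p)+\operatorname{lin}(T)$, hence vanishes. I would cite rather than reprove the required cone-specific facts.
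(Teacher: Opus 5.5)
Your proposal is correct and follows the paper's proof essentially step for step. Both write the KKT system as a nonsmooth equation through $\Pi_{\epi\|\cdot\|_2}$, use its strong semismoothness \cite{Ding2014}, and get nonsingularity of the generalized Jacobian from strong convexity plus constraint nondegeneracy; the paper cites \cite[Theorem 4.1]{Guo2015} for this where you cite Sun and Ding et al. Both then do the implicit-function step via Proposition~\ref{prop:in-fun-th}, which the paper packages through \cite[Theorem 3]{Meng2005} and which is exactly your augmented-map argument, and finish by composing with $\|\cdot\|$ at $y(\lambda^*)\neq 0$.

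The only slip is a sign. Since stationarity reads $y-b+\cA X=0$, the multiplier complementary to $(\lambda,\cA^*y)$ is $(s,-X)\in\epi\|\cdot\|_*$, so the projection argument should be $(\lambda,\cA^*y)-(s,-X)$, as in \eqref{eq:nnls-kktc}. As you wrote it, the true primal--dual triple satisfies the system only when $X(\lambda)=0$, but fixing the sign leaves the rest of your argument unchanged.
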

\begin{proof}
	The primal-dual solution $(y(\lambda^*),s(\lambda^*),X(\lambda^*))$ of \eqref{eq:nnrls-dc} and \eqref{eq:nnls-pc}  with $\lambda=\lambda^*$ satisfies the nonsmooth equation
	\begin{equation}\label{eq:nnls-kktc}
		G(\lambda^*,y, s, X)=\left[\begin{array}{c}
			y-b+\mathcal{A} X \\
			\left(\lambda^*, \mathcal{A}^* y \right)-\Pi_{\operatorname{epi} \|\cdot \|_2}\left(\left(\lambda^*,\mathcal{A}^*y\right)-(s,-X)\right)
		\end{array}\right]=0,
	\end{equation}
	which is equivalent to the KKT condition \eqref{eq:nnrls-kkt}. Given the strong convexity of the objective in \eqref{eq:nnrls-dc} and the constraint nondegeneracy \eqref{eq:cn_nnlsdc}, we know that any element of $\partial_{y,S,X} G(\lambda^*,y(\lambda^*),s(\lambda^*),X(\lambda^*))$ is nonsingular \cite[Theorem 4.1]{Guo2015}. According to \cite[Theorem 3]{Meng2005}, there exists an open interval $\mathcal{N}$ containing $\lambda^*$ and a Lipschitz continuous function $(y(\lambda),s(\lambda),X(\lambda))$ on $\mathcal{N}$ s.t. $G(\lambda,y(\lambda),s(\lambda),X(\lambda))=0$. Additionally, the projection $\Pi_{\operatorname{epi} \|\cdot \|_2}(\cdot)$ is strongly semismooth \cite[Theorem 4, Theorem 2]{Ding2014}. Utilizing Proposition \ref{prop:in-fun-th} and \cite[Theorem 3]{Meng2005}, we conclude that $(y(\lambda), s(\lambda), X(\lambda))$ exhibits strong semismoothness at $\lambda^*$. In addition, given $y(\lambda) = \Pi_{\lambda Q}(b) \neq 0$ and $\varphi(\lambda) = \|y(\lambda)\|$, the strong semismoothness of $\varphi(\cdot)$ at $\lambda^*$ follows from the composition of strongly semismooth functions. This completes the proof.
\end{proof}

\begin{remark}
	The set $Q$ is an affine slice of the PSD cone, i.e., the intersection of the PSD cone with an affine subspace. A key property of such sets, recently proven in \cite{Chen2025}, is that their projection operator can have an arbitrarily low degree of semismoothness (meaning $\gamma$ in Proposition \ref{prop:ss} can be arbitrarily close to 0). This fact underscores the importance of the conditions imposed in Proposition \ref{prop:sss}, as they are required to preclude this pathological behavior and secure a fast convergence rate.
\end{remark}

Propositions \ref{prop:sec_converge1} and \ref{prop:sec_converge2} ensure only local convergence. For global convergence, we adopt a stabilized secant method (see, e.g., \cite[Algorithm 5.2]{Li2024}). The remaining numerical aspects are addressed in Section \ref{sec:numeric-issue}.

\section{A proximal generation method for \eqref{eq:nnrls}}
In our level set method for \eqref{eq:lscnnm}, we need to solve a sequence of regularized subproblems \eqref{eq:nnrls} with varying $\lambda$. Our key innovation is a proximal generation technique that warm-starts each subproblem using the previous solution 
$X^{k-1}$ and exploits the inherent low-rank structure of solutions to reduce computation.

For large-scale problems, the following factorization form is much more efficient than operating on the full matrix:
\begin{equation}\label{eq:lr-nnrls}
	\min _{L \in \mathbb{R}^{m\times r}, R\in \bR^{n\times r}}g_\lambda(L,R):=\frac{\lambda}{2}(\|L\|^2+\|R\|^2) +\frac{1}{2}\|\cA (LR^T)-b\|^2 .
\end{equation}
The equivalence between \eqref{eq:nnrls} and \eqref{eq:lr-nnrls} is summarized in the following proposition, which can be proved by a similar approach in \cite[Lemma 5.1]{Recht2010}.

\begin{proposition}\label{prop:nn-lr-eq}
	Let $X^*$ be a solution to the original convex problem \eqref{eq:nnrls} with $\operatorname{rank}(X^*) \leqslant r$. The optimal value of \eqref{eq:lr-nnrls} is the same as \eqref{eq:nnrls}. Moreover, for any factorization $X=LR^T$, we have
	$f_\lambda(X)\leqslant g_\lambda(L,R)$.
	If $X$ has an SVD $X=U_X\Sigma_XV_X^T$ and $L=U_X\sqrt{\Sigma_X}$, $R=V_X\sqrt{\Sigma_X}$, then
	$f_\lambda(X)= g_\lambda(L,R)$.
\end{proposition}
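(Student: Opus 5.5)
The argument rests on the variational characterization of the nuclear norm,
\[
\|X\|_*=\min\Bigl\{\tfrac12(\|L\|^2+\|R\|^2)\ \Bigm|\ LR^T=X\Bigr\},
\]
taken over factors with at least $\operatorname{rank}(X)$ columns. I will prove the three assertions in the order: the inequality, the equality for the balanced SVD factorization, and finally the equality of optimal values.

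\emph{Inequality.} Let $X=LR^T$ with $L\in\bR^{m\times r}$ and $R\in\bR^{n\times r}$. The nuclear norm is the dual of the spectral norm, and $\|U_X V_X^T\|_2\leqslant 1$ where $X=U_X\Sigma_XV_X^T$ is a thin SVD. Hence
\[
\|X\|_*=\langle U_XV_X^T,LR^T\rangle=\langle U_X^TL,\,V_X^TR\rangle\leqslant \|U_X^TL\|\,\|V_X^TR\|\leqslant\|L\|\,\|R\|\leqslant\tfrac12(\|L\|^2+\|R\|^2).
\]
The data-fitting term $\tfrac12\|\cA X-b\|^2$ is the same in $f_\lambda(X)$ and $g_\lambda(L,R)$. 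Multiplying the bound above by $\lambda$ and adding this common term gives $f_\lambda(X)\leqslant g_\lambda(L,R)$.

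\emph{Equality for the balanced factorization.} Now take $L=U_X\sqrt{\Sigma_X}$ and $R=V_X\sqrt{\Sigma_X}$, padding with zero columns if $\operatorname{rank}(X)<r$. Then $LR^T=X$. Since $U_X$ and $V_X$ have orthonormal columns,
\[
\|L\|^2=\|R\|^2=\operatorname{tr}(\Sigma_X)=\|X\|_*,
\]
so $g_\lambda(L,R)=f_\lambda(X)$.

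\emph{Equality of optimal values.} The inequality shows that $\inf g_\lambda\geqslant \min f_\lambda=f_\lambda(X^*)$, because every pair $(L,R)$ yields a matrix $LR^T$ that is feasible for \eqref{eq:nnrls}. For the reverse direction, $\operatorname{rank}(X^*)\leqslant r$, so the balanced factorization of $X^*$ fits into $\bR^{m\times r}\times\bR^{n\times r}$ after zero-padding. By the equality just proved it attains $g_\lambda=f_\lambda(X^*)$. The two optimal values therefore coincide, and the minimum of \eqref{eq:lr-nnrls} is attained.

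The only delicate step is the chain of inequalities in the first part. Two points need checking there: that the dual-norm representation uses $U_XV_X^T$ with the thin SVD, and that $\|U_X^TL\|\leqslant\|L\|$ holds because $U_X$ has orthonormal columns. Everything else is bookkeeping.
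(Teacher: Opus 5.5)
Your proof is correct, but it proves the key identity by a different route than the one the paper cites. The paper gives no argument of its own beyond a pointer to \cite[Lemma 5.1]{Recht2010}. That lemma derives the identity $\|X\|_*=\min_{LR^T=X}\frac12(\|L\|^2+\|R\|^2)$ from the semidefinite representation $\|X\|_*=\min\bigl\{\frac12(\operatorname{tr}W_1+\operatorname{tr}W_2)\bigm| \left(\begin{smallmatrix} W_1 & X\\ X^T & W_2\end{smallmatrix}\right)\succeq 0\bigr\}$. Every factorization $X=LR^T$ gives the feasible point $W_1=LL^T$, $W_2=RR^T$ with objective $\frac12(\|L\|^2+\|R\|^2)$, which yields the lower bound. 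The SVD-based choice $W_1=U_X\Sigma_XU_X^T$, $W_2=V_X\Sigma_XV_X^T$ is optimal and has rank $\operatorname{rank}(X)$, which is why $r\geqslant\operatorname{rank}(X^*)$ columns suffice.

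The upper bound via the balanced factorization, and the passage to the optimal values of \eqref{eq:nnrls} and \eqref{eq:lr-nnrls} by adding the common least-squares term, are the same in both routes. The difference is the lower bound, which you obtain directly by pairing $LR^T$ with $U_XV_X^T$ and applying Cauchy--Schwarz, $\|U_X^TL\|\leqslant\|L\|$, and AM--GM. This makes your argument self-contained and elementary. The SDP representation is itself normally proved from nuclear/spectral norm duality plus conic strong duality, so you are using the more primitive fact, and your bound visibly holds for factors of any width.

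What the SDP route buys is context rather than rigor. It exhibits \eqref{eq:lr-nnrls} as a Burer--Monteiro factorization of a convex semidefinite program, which is the viewpoint behind the factorization-based solvers the paper builds on and compares against. It also explains the column count $r$ as a rank bound on an optimal PSD solution.
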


Our proximal generation method evaluates the quality of a candidate solution $X^k$ using the (relative) subgradient residual (r)SGR. Building on the SGR measure from \cite{Toh2010} and its discussion in \cite[Section 4.2.1]{Du2015}, we define these residuals as follows:
\begin{subequations}\label{eq:pgr-sgr-rsgr}
	\begin{align}
		\operatorname{PG}_\lambda(X^k):=&\operatorname{prox}_{(\lambda/L_{\cA^*\cA})\|\cdot\|_*}\left( X^k-\frac{1}{L_{\cA^*\cA}}\cA^*\left(\cA X^k-b \right)  \right),\label{eq:pg}\\
		\operatorname{SGR}_\lambda(X^k):=&L_{\cA^*\cA}(X^k-\operatorname{PG}_\lambda(X^k))+\cA^*\left( \cA(\operatorname{PG}_\lambda(X^k)-X^{k})\right),\label{eq:sgr}\\
		\text{ and }  \operatorname{rSGR}_\lambda(X^k):=&\operatorname{SGR}_\lambda(X^k)/\left(L_{\cA^*\cA}(1+\|\operatorname{PG}_\lambda(X^k)\|) \right)\label{eq:rsgr} ,
	\end{align}
\end{subequations}
where $L_{\cA^*\cA}$ is the Lipschitz constant of the gradient of $\frac{1}{2}\|\cA X-b\|^2$. For completeness and clarity, we formally validate this approach in the following proposition.

\begin{proposition}\label{prop:sgr}
	Let $\{X^k\}$ be a sequence that converges to a solution of \eqref{eq:nnrls}. It holds that: 
	
	(i) matrix $X^*$ satisfies $X^*=\operatorname{PG}_\lambda(X^*)$ if and only if $X^*$ is a solution of \eqref{eq:nnrls};
	
	(ii) $\operatorname{SGR}_\lambda(X^k)\in \partial f_\lambda(\operatorname{PG}_\lambda(X^k))$;
	
	(iii) $\lim_{k\to\infty}\|\operatorname{SGR}_\lambda(X^k)\|=0$.
\end{proposition}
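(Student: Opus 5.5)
Write $L:=L_{\cA^*\cA}$, $h(X):=\frac12\|\cA X-b\|^2$, so $\nabla h(X)=\cA^*(\cA X-b)$ is $L$-Lipschitz and $f_\lambda=\lambda\|\cdot\|_*+h$. All three parts come from the optimality condition of the proximal mapping in \eqref{eq:pg} together with continuity of $\operatorname{PG}_\lambda$.

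Set $W:=\operatorname{PG}_\lambda(X)$. By definition $W$ minimizes $\frac{\lambda}{L}\|W\|_*+\frac12\|W-(X-\frac1L\nabla h(X))\|^2$. Its first-order optimality condition reads
\[
0\in \tfrac{\lambda}{L}\partial\|W\|_*+W-X+\tfrac1L\nabla h(X),
\]
which is equivalent to
\[
L(X-W)-\nabla h(X)\in\lambda\partial\|W\|_*.
\]
For (i), put $W=X$. The inclusion becomes $-\nabla h(X)\in\lambda\partial\|X\|_*$, that is, $0\in\partial f_\lambda(X)$. This is optimality for the convex problem \eqref{eq:nnrls} by the sum rule, since $h$ is smooth. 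Conversely, if $X^*$ is optimal, then $0\in\lambda\partial\|X^*\|_*+\nabla h(X^*)$. Hence $X^*$ satisfies the prox optimality condition with $W=X^*$. Strong convexity of the prox objective gives uniqueness of its minimizer, so $X^*=\operatorname{PG}_\lambda(X^*)$.

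For (ii), add $\nabla h(W)$ to both sides of the inclusion. This gives
\[
L(X-W)+\nabla h(W)-\nabla h(X)\in\lambda\partial\|W\|_*+\nabla h(W)=\partial f_\lambda(W).
\]
Moreover $\nabla h(W)-\nabla h(X)=\cA^*\cA(W-X)$, so the left-hand side is exactly $\operatorname{SGR}_\lambda(X)$ as defined in \eqref{eq:sgr}.

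For (iii), $\operatorname{PG}_\lambda$ is the composition of the firmly nonexpansive $\operatorname{prox}$ with the affine map $X\mapsto X-\frac1L\nabla h(X)$, so it is continuous. By (\eqref{eq:sgr}) the map $X\mapsto\operatorname{SGR}_\lambda(X)$ is therefore continuous as well, and satisfies $\|\operatorname{SGR}_\lambda(X)\|\le 2L\|X-\operatorname{PG}_\lambda(X)\|$ because $\|\cA^*\cA\|\le L$. If $X^k\to X^*$ with $X^*$ a solution, then (i) gives $\operatorname{PG}_\lambda(X^k)\to\operatorname{PG}_\lambda(X^*)=X^*$. Hence $\|\operatorname{SGR}_\lambda(X^k)\|\le 2L\|X^k-\operatorname{PG}_\lambda(X^k)\|\to0$.

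None of these steps is a real obstacle. The only point needing care is the converse in (i), which uses uniqueness of the prox minimizer to identify $W$ with $X^*$.
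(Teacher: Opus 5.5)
Your proof is correct and follows the paper's route. The paper says (i) and (ii) ``follow directly from the definition of the proximal mapping,'' and you spell this out via the prox optimality condition, including the uniqueness argument for the converse in (i). For (iii) you, like the paper, combine continuity of $\operatorname{PG}_\lambda$ with (i) to get $X^k-\operatorname{PG}_\lambda(X^k)\to 0$; your bound $\|\operatorname{SGR}_\lambda(X)\|\le 2L_{\cA^*\cA}\|X-\operatorname{PG}_\lambda(X)\|$ simply makes the paper's appeal to continuity of $\cA^*\cA$ quantitative.
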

\begin{proof}
	Results (i) and (ii) follow directly from the definition of the proximal mapping. We focus on the third conclusion.
	
	Since $X^k$ converges to a solution of \eqref{eq:nnrls}, the continuity of the proximal mapping implies that $\operatorname{PG}_\lambda(X^k)-X^k$ converges to zero. By the definition of $\operatorname{SGR}_\lambda(X^k)$ and the continuity of the mapping $\cA^*\cA$, we conclude that $\lim_{k\to\infty}\|\operatorname{SGR}_\lambda(X^k)\|=0$. This completes the proof.
\end{proof}

In our algorithm, we terminate the iteration when 
$\|\operatorname{rSGR}_\lambda(X^k)\|<\varepsilon$.
We frequently compute a low-rank decomposition of a matrix~$X$ from its singular value decomposition (SVD), $X = U_X \Sigma_X V_X^T$. Defining $L = U_X \sqrt{\Sigma_X}$ and $R = V_X \sqrt{\Sigma_X}$, we denote this decomposition by 
$
\operatorname{LR-SVD}(X) = \bigl(U_X \sqrt{\Sigma_X},\; V_X \sqrt{\Sigma_X}\bigr)$.

\begin{algorithm}[!h]
	\caption{A proximal generation method for \eqref{eq:nnrls}}
	\label{alg:RS}
	\begin{algorithmic}[1]
			\Require An initial solution  $X^{0}\in\bR^{m\times n}$, tolerances $\varepsilon,\varepsilon_{L,R}>0$, step length of proximal gradient $1/L_{\cA^*\cA}$, and a positive integer $k_{\text{check}}$.
		\State Compute a step of proximal gradient $X^0_+=\operatorname{PG}_\lambda(X^0)$.
\State		Set $(L^1,R^1)=\operatorname{LR-SVD}(X^0_+)$, $\text{Res}=\operatorname{rSGR}_\lambda(X^0)$ and $k=1$.
		\While{$\text{Res}>\varepsilon$}
		\State Alt. Min.: $L^{k+1}=\argmin_L g_\lambda(L,R^k)$;$\quad R^{k+1}=\argmin_R g_\lambda(L^{k+1},R)$.	
\State	Set $k\leftarrow k+1$.
		\If{$\mod(k,k_{\text{check}})=0$ or $\|\nabla g_\lambda(L^k,R^k)\|\leqslant\varepsilon_{L,R}$}
		\State 	Set $X^{k}=L^{k}{R^{k}}^T$. Compute a step of proximal gradient $X^k_+=\operatorname{PG}_\lambda(X^k)$.
	\State	Set $(L^{k+1},R^{k+1})=\operatorname{LR-SVD}(X^k_+)$, $\text{Res}=\operatorname{rSGR}_\lambda(X^k)$ and $k\leftarrow k+1$.
		\EndIf
		\EndWhile
		
\State Return $X^*=X^{k-1}_+=L^k{R^k}^T$.
	\end{algorithmic}
\end{algorithm}

Our proximal generation method (Algorithm \ref{alg:RS}) is efficient because it exploits the problem's low-rank structure via the nonconvex formulation \eqref{eq:lr-nnrls}. We solve it by alternating minimization, with each subproblem handled efficiently. Proximal gradient steps provide global convergence and strong warm starts for nonconvex updates. These features make the method well-suited to the nuclear norm regularized least-squares problem \eqref{eq:nnrls}.

Let ${k_i}$ denote the indices at which the proximal gradient step is executed. We have
\[f_\lambda(X_+^{k_{i+1}})\leqslant f_\lambda(X^{k_{i+1}})\leqslant g_\lambda(L^{k_{i+1}},R^{k_{i+1}})\leqslant g_\lambda(L^{k_{i}+1},R^{k_{i}+1})=f_\lambda(X^{k_i}_+).\]
The first inequality follows from \cite[Lemma 2.3]{Beck2009}, the second and last equalities from Proposition \ref{prop:nn-lr-eq}, and the third from alternating minimization. The convergence of our proximal generation method (Algorithm \ref{alg:RS}) is shown as follows.
\begin{proposition}\label{prop:conv-rs}
	Let ${X^{k_i}_+}$ be a sequence generated by proximal gradient steps in the proximal generation method (Algorithm \ref{alg:RS}), we have \[f_\lambda(X^{k_i})-f_\lambda(X^*)=O\left( \frac{1}{i}\right) ,\quad \forall X^*\in\argmin f_\lambda(X), \]
	and any limiting point of $\{X_+^{k_i}\}$ is a solution of \eqref{eq:nnrls}. 
\end{proposition}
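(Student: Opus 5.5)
The plan is to reproduce the standard $O(1/k)$ analysis of the proximal gradient method (ISTA, \cite{Beck2009}), applied only along the subsequence of proximal gradient steps. The alternating minimization phases between two such steps are treated as non-increasing perturbations. The key structural fact is the monotone chain displayed just before the proposition:
\[
f_\lambda(X_+^{k_{i+1}})\leqslant f_\lambda(X^{k_{i+1}})\leqslant f_\lambda(X_+^{k_i}).
\]
Each proximal step is thus taken from a point $X^{k_{i+1}}$ that is no worse than the previous proximal output.

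\textbf{Step 1 (one-step inequality).} Write $L:=L_{\cA^*\cA}$. I would invoke \cite[Lemma 2.3]{Beck2009} with step $1/L$ at the point $X^{k_{i}}$: for any $X$,
\[
f_\lambda(X)-f_\lambda(X_+^{k_i})\geqslant \frac{L}{2}\|X_+^{k_i}-X^{k_i}\|^2+L\langle X^{k_i}-X,\,X_+^{k_i}-X^{k_i}\rangle .
\]
Taking $X=X^*$ and rearranging gives
\[
f_\lambda(X_+^{k_i})-f_\lambda(X^*)\leqslant \frac{L}{2}\bigl(\|X^{k_i}-X^*\|^2-\|X_+^{k_i}-X^*\|^2\bigr).
\]

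\textbf{Step 2 (telescoping).} This is the main obstacle. In classical ISTA the next base point equals $X_+^{k_i}$, so the right-hand side telescopes. Here the next base point is $X^{k_{i+1}}=L^{k_{i+1}}(R^{k_{i+1}})^T$, which is produced by nonconvex alternating minimization. So $\|X^{k_{i+1}}-X^*\|$ need not be bounded by $\|X_+^{k_i}-X^*\|$, and the Fejér-type telescoping breaks.

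My first attempt would avoid telescoping distances and use a function-value recursion instead. Set $\Delta_i:=f_\lambda(X_+^{k_i})-f_\lambda(X^*)$.
\begin{itemize}
\item Combine Step 1 with $\Delta_{i+1}\leqslant f_\lambda(X^{k_{i+1}})-f_\lambda(X^*)\leqslant\Delta_i$.
\item Use the sufficient decrease $f_\lambda(X^{k})-f_\lambda(X^k_+)\geqslant \frac{L}{2}\|X^k_+-X^k\|^2$.
\item Use the convexity estimate $f_\lambda(X_+^{k})-f_\lambda(X^*)\leqslant \|\operatorname{SGR}_\lambda(X^k)\|\,\|X_+^k-X^*\|$ together with $\|\operatorname{SGR}_\lambda(X^k)\|\leqslant 2L\|X^k-X_+^k\|$, which follows from Proposition~\ref{prop:sgr}(ii).
\end{itemize}
Together these give $\Delta_i-\Delta_{i+1}\geqslant c\,\Delta_{i+1}^2/D^2$, where $D$ bounds $\|X_+^{k_i}-X^*\|$.

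Such a $D$ exists because $f_\lambda$ is coercive: $\lambda\|\cdot\|_*$ dominates a norm. Hence the level set $\{f_\lambda\leqslant f_\lambda(X_+^{k_0})\}$ is bounded, and all iterates stay in it by monotonicity. The standard lemma for sequences satisfying $\Delta_i-\Delta_{i+1}\geqslant c\Delta_{i+1}^2$ then yields $\Delta_i=O(1/i)$. Since $f_\lambda(X^{k_{i+1}})\leqslant \Delta_i+f_\lambda(X^*)$, the same rate transfers to $f_\lambda(X^{k_i})$ as stated.

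\textbf{Step 3 (limit points).} Boundedness of the level set gives limit points of $\{X_+^{k_i}\}$. Since $f_\lambda$ is continuous and $f_\lambda(X_+^{k_i})\to f_\lambda(X^*)$, every limit point attains the optimal value and is therefore a solution of \eqref{eq:nnrls}.
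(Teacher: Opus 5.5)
Your argument is correct, but it takes a different route from the paper. The paper does not derive the rate itself. It cites \cite[Theorem 4]{Lee2024} for the $O(1/i)$ bound, obtains limit points from coercivity, and proves their optimality via $\operatorname{SGR}_\lambda(X^{k_i})\in\partial f_\lambda(X_+^{k_i})$ (Proposition~\ref{prop:sgr}(ii)) together with the outer semicontinuity of $\partial f_\lambda$.

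You instead give a self-contained proof. Sufficient decrease, the monotone chain $f_\lambda(X^{k_{i+1}})\leqslant f_\lambda(X_+^{k_i})$, the convexity bound $\Delta_{i+1}\leqslant\|\operatorname{SGR}_\lambda(X^{k_{i+1}})\|\,D$, and boundedness of the initial level set combine into $\Delta_i-\Delta_{i+1}\geqslant\Delta_{i+1}^2/(8LD^2)$, hence $\Delta_i=O(1/i)$. This shows that the only property of the alternating-minimization phase you use is that it does not increase $f_\lambda$, and it gives an explicit constant in terms of $L$ and the diameter of $\{f_\lambda\leqslant f_\lambda(X_+^{k_0})\}$. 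The paper's version is shorter but leaves the hypothesis-checking to the cited theorem. Your limit-point argument (continuity of $f_\lambda$ plus $f_\lambda(X_+^{k_i})\to f_\lambda(X^*)$) is also simpler than the paper's, which tacitly needs $\operatorname{SGR}_\lambda(X^{k_i})\to0$ along the subsequence.

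Three minor points:
\begin{itemize}
\item The bound $\|\operatorname{SGR}_\lambda(X^k)\|\leqslant 2L\|X^k-X^k_+\|$ comes from definition \eqref{eq:sgr}, since $\operatorname{SGR}_\lambda(X^k)=(L\,\mathrm{Id}-\cA^*\cA)(X^k-X^k_+)$ with $0\preceq\cA^*\cA\preceq L\,\mathrm{Id}$. It even holds with constant $L$. Proposition~\ref{prop:sgr}(ii) is what you use for the convexity estimate, not for this bound.
\item Step~1 with a general $X$, and the telescoping discussion, are not needed. Only the special case $X=X^{k_i}$ (sufficient decrease) enters.
\item Your recursion has $\Delta_{i+1}^2$ rather than $\Delta_i^2$ on the right. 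So the ``standard lemma'' needs the usual case split on whether $\Delta_{i+1}\geqslant\Delta_i/2$. That split gives $1/\Delta_{i+1}-1/\Delta_i\geqslant\min\{c/2,\,1/\Delta_0\}$, and summing yields the claimed rate.
\end{itemize}
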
  
\begin{proof}
	The $O(\frac{1}{i})$ complexity of the function value is from \cite[Theorem 4]{Lee2024}. The existence of a limiting point of $\{X_+^{k_i}\}$ is from the coerciveness of $f_\lambda$. The optimality of the limiting point comes from the upper semicontinuity of $\partial f_\lambda(\cdot)$ and (ii) of Proposition \ref{prop:sgr}. This completes the proof.
\end{proof}
In practice, the alternating minimization method is highly effective, typically requiring only 2 to 5 proximal gradient steps.
\begin{remark}
We avoid using $\|X-\operatorname{prox}_{\lambda\|\cdot\|_*}\left(X-\mathcal{A}^*(\mathcal{A} X-b)\right)\|$ to assess solution quality, as it requires an extra SVD. 
\end{remark}

Combining the proximal generation method Algorithm \ref{alg:RS} and the secant method \eqref{eq:sec1}, we obtain our proximal generation based level set method with secant iterations in Algorithm \ref{alg:rsls-sec}.  Practical implementation details are discussed in the next section.

\begin{remark}
	In Algorithm \ref{alg:rsls-sec}, we initialize $\lambda^{-1}$ and $\lambda^{0}$ near the upper bound $\|\cA^{*}b\|_{2}$. Although smaller starting values still converge, they often induce higher rank subproblems and extra cost. Starting near $\|\cA^{*}b\|_{2}$ yields lower rank iterations, allowing us to grow the factor matrices $U$ and $V$ incrementally and thus reduce computation.
\end{remark}
\begin{algorithm}[!h]
	\caption{A proximal generation based level set method with secant iterations for \eqref{eq:lscnnm}}
	\label{alg:rsls-sec}
	\begin{algorithmic}[1]
		\Require Initial points and bounds for $\lambda$: $0 \leqslant \lambda_{\text{lo}} \leqslant \lambda^{-1} < \lambda^0 \leqslant \lambda_{\text{up}} \leqslant \|\cA^*b\|_2$, tolerances $\varepsilon_{\text{sec}}>0$, initial point $X^0=U_{X^0}\Sigma_{X^0}V^T_{X^0}$ and $k=0$.
		\While{$|\varphi(\lambda^k)-\varrho|>\varepsilon_{\text{sec}}$}
		\State Generate $\lambda^{k+1}$ with some globally convergent secant method, e.g. \cite[Alg 5.2]{Li2024} .
		\State Compute $X^{k+1}=U_{X^{k+1}}\Sigma_{X^{k+1}}V^T_{X^{k+1}}$ by the proximal generation method (Algorithm \ref{alg:RS}) with initial point $X^{k}=U_{X^{k}}\Sigma_{X^{k}}V^T_{X^{k}}$.
		\State $k\leftarrow k+1$.
		\EndWhile
		
		\State Return $\lambda^* = \lambda^k$ and $X(\lambda^*)$.
	\end{algorithmic}
\end{algorithm}
\section{Numerical experiments}
In this section, we showcase the numerical performance of our method. We tested \eqref{eq:lscnnm} from low-rank regressions and low-rank matrix completions. In our experiments, we measure the accuracy of the obtained $X$ and $\lambda$ by
\begin{equation}\label{eq:LSeta}
	\eta:=\max \left\lbrace \frac{|\left\|\cA X-b \right\|-\varrho |}{\max \left\lbrace 1,\varrho \right\rbrace },	 \operatorname{rSGR}_\lambda(X) \right\rbrace . 
\end{equation}
We compare our proximal generation based level set method using secant iterations ($\text{GLS}_\text{sec}$) with the proximal generation based level set method using only bisection iterations ($\text{GLS}_\text{bis}$), as well as with  SPGL1\footnote{\url{https://friedlander.io/publications/2010-sparse-optimization-with-least-squares/}.} \cite{VandenBerg2011} and ADMM,
 all with a tolerance of $10^{-3}$. For SPGL1, which has complicated stopping conditions, we set \verb|options.optTol=1e-3| and use the \verb|SPOR-SPG-H| mode, which is the faster option according to \cite[Section 8]{VandenBerg2011}. 
All numerical experiments were implemented using MATLAB R2023a (version 9.14) on Ubuntu 22.04.3 LTS, running on an Intel Core i9-10900 CPU operating at 2.80GHz and supported by 64GB of RAM.
\subsection{Numerical Issues in Algorithm \ref{alg:rsls-sec}}\label{sec:numeric-issue}

To achieve fast local convergence of the secant method, we use initial bisection steps to obtain a good starting point. The algorithm switches to the secant iteration \eqref{eq:sec1} when $
\frac{|\varphi(\lambda^k) - \varrho|}{\max\{1,\varrho\}} \leqslant 0.1$,
and reverts to bisection if stagnation occurs.

The parameter \(k_{\text{check}}\) in Algorithm \ref{alg:RS} is set adaptively: initially large (e.g., 40) to reduce computational cost, then small (e.g., 2--4) once the relative error falls below \(10^{-2}\). Iterations stop if the residual stagnates or a limit is reached.

For efficiency, proximal gradient steps use a partial SVD (PROPACK), which requests slightly more singular values than the previous rank. The alternating minimization subproblems solve two positive definite systems: via Cholesky decomposition for small-scale matrices or the preconditioned quasi-minimal residual method otherwise. For matrix completion with \(\operatorname{Id}-\cA^*\cA \succeq 0\), we solve a majorized version of \(g_\lambda(L,R)\).

\subsection{ADMM algorithms for \eqref{eq:lscnnm}}

For matrix completion problems where the operator $\cA\cA^*$ is an identity mapping, we apply ADMM to the dual problem of \eqref{eq:lscnnm}:
\begin{equation}\label{eq:dual1lscnnm}
		\min_{s\in\bR^p,T\in\bR^{m\times n}}\delta_{\|\cdot\|_2\leqslant 1}(T)+\varrho\|s\|+\left\langle b,s\right\rangle 
		\text{ s.t. } \cA^*s+T=0.
\end{equation}
In the numerical test, we apply the relaxed ADMM in \cite[Algorithm 1.1]{Sun2024} with the relaxed parameter $\rho=1.8$, and adjust the penalized parameter $\sigma$ by the primal-dual feasibility criterion similar to the approach in \cite{Yang2021}. This configuration performs better in our preliminary test. 

For low-rank regression problems, we apply ADMM to the equivalent form of \eqref{eq:dual1lscnnm}
\begin{equation}\label{eq:dual2lscnnm}
		\min_{r,s\in\bR^p,T\in\bR^{m\times n}} \delta_{\|\cdot\|_2\leqslant 1}(T)+\varrho\|r\|+\left\langle b,s\right\rangle \\
		\text{ s.t. } \cA^*s+T=0,\quad r-s=0,
\end{equation}
and set $(r,T)$ as one block and $s$ as the other. In the numerical test, we apply ADMM \cite[Appendix B]{Fazel2013} with dual step length $\tau = 1.618$, and adjust the penalized parameter $\sigma$ based on the primal-dual feasibility criterion. This tailored configuration has yielded improved performance based on our preliminary testing.

To keep the stopping criteria consistent with \eqref{eq:LSeta}, we measure the optimality of ADMM by 
\begin{equation}\label{eq:admmres_lscnnm}
	\frac{|\left\|\cA X-b \right\|-\varrho |}{\max \left\lbrace 1,\varrho \right\rbrace } \text{ and } \operatorname{rSGR}_\lambda(X),
\end{equation} 
where $\operatorname{PG}_\lambda(X)$ in \eqref{eq:sgr} and \eqref{eq:rsgr} is replaced by $\operatorname{PG}_{\varrho/\|s\|}(X)$  or $\operatorname{PG}_{\varrho/\|r\|}(X)$ for matrix completion and low-rank regression, respectively.

\begin{remark}
	Recent advances have introduced successful accelerated ADMM-type methods \cite{Sun2024, Chen2024}. However, in our assessments, these accelerated versions did not exhibit superiority. This observation may be attributed to the nonpolyhedral components present in our problem formulations.
\end{remark}
\subsection{Matrix regression problems}
Low-rank matrix regression is an important application of \eqref{eq:lscnnm}. We collected several datasets, mainly from bioinformatics. In these examples, the linear map $\cA$ is given explicitly as a matrix, and $\cA X$ is implemented as matrix multiplication after vectorization. As in \cite{Huang2010} and \cite{Li2018a}, we expand the matrix $\cA$ using polynomial basis functions over its columns. Instances denoted by names ending with 'e$k$' are expanded using a $k$-order polynomial. 
Moreover, we also expand the rows in some instances. We summarize the statistics of our test data in Table \ref{table:lrmr_stat}. 
The breast cancer dataset referenced in \cite{Chin2006} can be accessed via the PMA package \cite{Witten2009}. This dataset is organized into distinct instances based on chromosomal divisions.
The yeast dataset from \cite{Spellman1998} is available via the spls package \cite{Chun2010}.
 Another dataset is from \cite{Qin2021}.The parameter $\varrho$ in \eqref{eq:lscnnm} is set by $\varrho=c\|b\|$ where the coefficient $c$ is listed in Table \ref{table:lrmr_stat}.
\begin{table}[t]
	\centering
	\caption{Statistics of test data in low-rank regressions}
	\begin{tabular}{cccccccc}
		\hline
		Prob idx & Name               & Source       & $m$     & $n$     & $p$        &$\|b\|$  & $c=\frac{\varrho}{\|b\|}$ \\ \hline
		1             & B\_C1to5           &   \cite{Chin2006}          & 601   & 5930  & 527770   & 5.50+3 & 0.08  \\
		2             & B\_C4              &   \cite{Chin2006}          & 167   & 715   & 63635    & 1.71+3 & 0.08  \\
		3             & B\_C6to10          &   \cite{Chin2006}          & 609   & 4084  & 363476   & 3.86+3 & 0.08  \\
		4             & B\_C7              &   \cite{Chin2006}          & 161   & 923   & 82147    & 2.00+3 & 0.08  \\
		5             & B\_C11             &  \cite{Chin2006}           & 179   & 1122  & 99858    & 2.25+3 & 0.08  \\
		6             & B\_C11to20         &   \cite{Chin2006}          & 822   & 7778  & 692242   & 5.58+3 & 0.08  \\
		7             & B\_C20plus         &    \cite{Chin2006}         & 173   & 1247  & 110983   & 2.29+3 & 0.08  \\
		8             & B\_C7e2            &    \cite{Chin2006}         & 13203 & 2767  & 246263   & 3.44+3 & 0.08  \\
		9             & B\_C20pluse2       &    \cite{Chin2006}         & 15225 & 3739  & 332771   & 3.91+3 & 0.08  \\
		10            & bt                 &   \cite{Qin2021}          & 939   & 4000  & 980000   & 3.61+0   & 0.15  \\
		11            & srf\_o3            &     \cite{Qin2021}         & 2042  & 3199  & 10809421 & 1.02+0     & 0.6   \\
		12            & yeaste2             &    \cite{Spellman1998}         & 5778  & 919   & 166339   & 1.86+2 & 0.08  \\
		13            & yeaste3             &    \cite{Spellman1998}         & 44362 & 1837  & 332497   & 2.63+2 & 0.15  \\
		14            & B\_CWhole          &      \cite{Chin2006}       & 2149  & 19672 & 1750808  & 9.13+3 & 0.08  \\
		15            & test1\_e3          &      \cite{Qin2021}        & 4556  & 2991  & 149550   & 2.63+1  & 0.08  \\
		16            & test1\_e4          &    \cite{Qin2021}         & 31961 & 2991  & 149550   & 2.63+1  & 0.08  \\ 
		17            & B\_C1to5e2         &    \cite{Chin2006}         & 5711  & 5930  & 527770   & 5.50+3 & 0.08  \\
		18            & B\_C6to10e2        &  \cite{Chin2006}           & 19733 & 4084  & 363476   & 3.86+3 & 0.08  \\
		19            & B\_C11to20e2     &    \cite{Chin2006}         & 25748 & 7778  & 692242   & 5.58+3 & 0.08  \\
		20            & bte2               &    \cite{Qin2021}          & 25058 & 4000  & 980000   & 3.61+0   & 0.35  \\
		21            & B\_C11e2           &    \cite{Chin2006}         & 16290 & 5606  & 498934   & 4.98+3 & 0.08  \\ \hline
	\end{tabular}
	\label{table:lrmr_stat}
\end{table}
\begin{table}[!h]
		\centering
	\caption{The performance of proximal generation based level set method with secant iterations ($\text{GLS}_\text{sec}$), SPGL1(SPG), and ADMM in solving the least-squares constrained nuclear norm minimization \eqref{eq:lscnnm} with the $\varrho=c\|b\|$ in Table \ref{table:lrmr_stat}}
\begin{tabular}{ccccc}
	\hline
	\multirow{2}{*}{idx} & \multirow{2}{*}{rank} & time(s)                                                   &  & residual                              \\
	&                       & $\text{GLS}_\text{sec}$ : SPG : ADMM                                             &  & $\text{GLS}_\text{sec}$ : SPG : ADMM                         \\ \hline
	1                    & 11                    & 4.36+0 $\mid$ 3.01+2 $\mid$ 1.26+1                         &  & 8.89-4 $\mid$ 9.89-4 $\mid$ 9.68-4 \\
	2                    & 25                    & 7.02-1 $\mid$ 7.70+0 $\mid$ 8.89-1                         &  & 1.23-4 $\mid$ 1.31-4 $\mid$ 1.79-4 \\
	3                    & 13                    & 3.85+0 $\mid$ 2.49+2 $\mid$ 5.88+0                         &  & 3.97-5 $\mid$ 8.63-4 $\mid$ 7.85-4 \\
	4                    & 17                    & 4.80-1 $\mid$ 7.88+0 $\mid$ 7.78-1                         &  & 8.59-4 $\mid$ 1.04-4 $\mid$ 9.83-4 \\
	5                    & 16                    & 6.40-1 $\mid$ 1.42+1 $\mid$ 7.88-1                         &  & 2.20-4 $\mid$ 8.43-4 $\mid$ 9.06-4 \\
	6                    & 11                    & 4.44+0 $\mid$ 3.24+2 $\mid$ 3.87+1                         &  & 5.93-4 $\mid$ 8.63-4 $\mid$ 9.93-4 \\
	7                    & 14                    & 2.63-1 $\mid$ 1.52+1 $\mid$ 7.72-1                         &  & 7.19-4 $\mid$ 2.15-4 $\mid$ 7.49-4 \\
	8                    & 3                     & 3.76+1 $\mid$ 2.74+3 $\mid$ 2.04+2                         &  & 3.27-4 $\mid$ 8.88-4 $\mid$ 2.56-4 \\
	9                    & 5                     & 2.60+1 $\mid$ \textgreater{}10800 $\mid$ 5.73+2             &  & 7.99-4 $\mid$ 6.59-1 $\mid$ 1.20-4 \\
	10                   & 200                   & 1.09+2 $\mid$ 3.82+2 $\mid$ 9.72+2                         &  & 7.28-4 $\mid$ 4.19-4 $\mid$ 5.20-4 \\
	11                   & 10                    & 1.08+1 $\mid$ 2.70+2 $\mid$ 2.53+3                         &  & 5.73-4 $\mid$ 7.56-4 $\mid$ 9.99-4 \\
	12                   & 17                    & 1.55+1 $\mid$ 7.16+2 $\mid$ 1.18+1                         &  & 1.95-4 $\mid$ 4.45-4 $\mid$ 4.59-4 \\
	13                   & 17                    & 3.71+2 $\mid$ \textgreater{}10800 $\mid$ 4.25+2             &  & 2.27-4 $\mid$ 5.69-1 $\mid$ 7.26-4 \\
	14                   & 10                    & 2.54+1 $\mid$ 6.80+3 $\mid$ 5.04+2                         &  & 9.86-4 $\mid$ 9.29-4 $\mid$ 9.99-4 \\
	15                   & 50                    & 9.54+1 $\mid$ \textgreater{}10800 $\mid$ 7.36+2             &  & 2.79-4 $\mid$ 1.24-2 $\mid$ 2.09-4 \\
	16                   & 50                    & 6.76+2 $\mid$ \textgreater{}10800 $\mid$ 5.38+3             &  & 2.65-4 $\mid$ 8.40-1 $\mid$ 1.64-4 \\
	17                   & 9                     & 4.42+1 $\mid$ 3.95+3 $\mid$ 2.72+2                         &  & 6.75-4 $\mid$ 7.50-4 $\mid$ 3.43-4 \\
	18                   & 13                    & 1.20+2 $\mid$ \textgreater{}10800 $\mid$ 9.80+2             &  & 8.92-4 $\mid$ 1.84-1 $\mid$ 1.79-4 \\
	19                   & 12                    & 2.01+2 $\mid$ \textgreater{}10800 $\mid$ 2.95+3             &  & 4.52-4 $\mid$ 7.85-1 $\mid$ 5.05-4 \\
	20                   & 103                   & 1.13+3 $\mid$ \textgreater{}10800 $\mid$ \textgreater{}10800 &  & 5.24-4 $\mid$ 5.02-1 $\mid$ 2.17-3 \\
	21                   & 5                     & 1.10+2 $\mid$ \textgreater{}10800 $\mid$ 5.57+2             &  & 4.88-5 $\mid$ 5.11-1 $\mid$ 9.82-4 \\ \hline
\end{tabular}
\label{table:lrmr_test}
\end{table}

We conduct a comparative analysis of our  $\text{GLS}_\text{sec}$ against SPGL1 and ADMM for solving least-squares constrained problems. 
 The initial bounds for $\lambda$ are set as $\lambda_{\text{lo}}=0$ and $\lambda_{\text{up}}=\|\cA^*b\|_2$, with an initial value of $\lambda^1=10^{-2}\|\cA^*b\|_2$. Additionally, we ran 20 iterations of the APG method at $\lambda^1$ to generate an initial point, which was then used for all evaluated algorithms. The time limit for each algorithm was 10,800 seconds.

The test results are presented in Table \ref{table:lrmr_test}. The second column represents the rank of $X$ computed by our method. The residual, denoted as $\eta$ in \eqref{eq:LSeta} for the level set method and ADMM when solving \eqref{eq:lscnnm}, is also presented. For SPGL1, the residual is characterized by \verb|Rel Error| in its log, which is actually the first term in \eqref{eq:LSeta}. In cases where SPGL1 or ADMM cannot solve \eqref{eq:lscnnm} within the time limit, we report the residual of their solution from the final iteration. 

The results presented in Table \ref{table:lrmr_test} demonstrate the superiority of our $\text{GLS}_\text{sec}$. Compared
 with other methods, both $\text{GLS}_\text{sec}$ successfully solve all instances, whereas SPGL1 fails to solve many instances within the time limit. For these unsolved cases, the solutions returned by SPGL1 at the maximum time are significantly below the desired accuracy. For instances that SPGL1 can solve, $\text{GLS}_\text{sec}$ is substantially faster, with speedups reaching nearly 300 times. Although ADMM also fails to solve all instances, it is more efficient than SPGL1 and even slightly outperforms $\text{GLS}_\text{sec}$ in Problem 12. For most problems, $\text{GLS}_\text{sec}$ is more efficient than ADMM with speedups exceeding 200 times in some cases.
\begin{table}[!h]
		\centering
	\caption{The performance of proximal generation based level set method with secant iterations ($\text{GLS}_\text{sec}$) and with bisection iterations ($\text{GLS}_\text{bis}$)}
\begin{tabular}{cccccccccc}
	\hline
	\multirow{2}{*}{idx} & \multicolumn{4}{c}{$\text{GLS}_\text{sec}$}               &  & \multicolumn{4}{c}{$\text{GLS}_\text{bis}$}               \\
	& $\lambda$      & iter & time(s)     & residual &  & $\lambda$      & iter & time(s)     & residual \\ \hline
	1                    & 3.216+2 & 3:2  & 4.363+0 & 8.893-4 &  & 3.205+2 & 9    & 8.323+0 & 6.605-4 \\
	2                    & 2.062+1 & 3:3  & 7.016-1 & 1.226-4 &  & 2.069+1 & 9    & 1.528+0 & 8.242-4 \\
	3                    & 2.379+2 & 3:3  & 3.846+0 & 3.972-5 &  & 2.381+2 & 9    & 5.929+0 & 1.571-4 \\
	4                    & 3.158+1 & 3:2  & 4.803-1 & 8.587-4 &  & 3.153+1 & 10   & 1.031+0 & 2.944-4 \\
	5                    & 5.164+1 & 3:3  & 6.403-1 & 2.198-4 &  & 5.170+1 & 11   & 1.065+0 & 5.106-5 \\
	6                    & 4.729+2 & 2:1  & 4.435+0 & 5.925-4 &  & 4.886+2 & 9    & 1.678+1 & 7.470-4 \\
	7                    & 5.980+1 & 3:1  & 2.634-1 & 7.193-4 &  & 6.067+1 & 7    & 6.215-1 & 9.378-4 \\
	8                    & 7.569+2 & 7:2  & 3.758+1 & 3.274-4 &  & 7.563+2 & 12   & 5.177+1 & 4.301-4 \\
	9                    & 1.087+3 & 2:1  & 2.601+1 & 7.993-4 &  & 1.087+3 & 9    & 6.424+1 & 4.096-4 \\
	10                   & 1.472-3 & 4:3  & 1.087+2 & 7.276-4 &  & 1.467-3 & 8    & 1.211+2 & 4.527-4 \\
	11                   & 1.256-3 & 2:1  & 1.081+1 & 5.726-4 &  & 1.669-3 & 4    & 2.051+1 & 6.659-4 \\
	12                   & 2.895+1 & 3:1  & 1.547+1 & 1.950-4 &  & 2.896+1 & 7    & 2.145+1 & 5.221-4 \\
	13                   & 1.220+2 & 7:1  & 3.713+2 & 2.268-4 &  & 1.219+2 & 12   & 4.493+2 & 3.469-4 \\
	14                   & 1.283+3 & 2:1  & 2.543+1 & 9.858-4 &  & 1.383+3 & 9    & 1.230+2 & 5.632-4 \\
	15                   & 6.092-2 & 7:2  & 9.543+1 & 2.792-4 &  & 6.097-2 & 15   & 1.588+2 & 6.340-4 \\
	16                   & 6.097-2 & 7:2  & 6.760+2 & 2.653-4 &  & 6.097-2 & 15   & 1.081+3 & 2.362-4 \\
	17                   & 5.048+2 & 2:3  & 4.415+1 & 6.746-4 &  & 4.324+2 & 3    & 1.907+1 & 2.333-5 \\
	18                   & 5.409+2 & 2:3  & 1.196+2 & 8.917-4 &  & 5.382+2 & 8    & 2.151+2 & 5.412-5 \\
	19                   & 9.876+2 & 2:2  & 2.011+2 & 4.521-4 &  & 9.938+2 & 7    & 4.395+2 & 9.809-4 \\
	20                   & 7.133-3 & 11:4 & 1.126+3 & 5.242-4 &  & 7.154-3 & 17   & 1.362+3 & 2.840-4 \\
	21                   & 1.229+3 & 3:2  & 1.103+2 & 4.883-5 &  & 1.225+3 & 8    & 1.380+2 & 7.543-4 \\ \hline
\end{tabular}
\label{table:lrls-sec-bis}
\end{table}

The comparison between the secant method and the bisection method is detailed in Table \ref{table:lrls-sec-bis}. The second and sixth columns display the solution $\lambda$ of the univariate nonlinear equation \eqref{eq:une} as obtained by the secant and bisection methods, respectively. The third column lists the number of iterations for both the bisection and the secant methods in $\text{GLS}_\text{sec}$, while the seventh column shows the iterations for the pure bisection method. Compared with the bisection variant $\text{GLS}_\text{bis}$, $\text{GLS}_\text{sec}$ requires fewer iterations and achieves a speedup of approximately 2 to 5 times, highlighting the effectiveness of our secant approach.
\subsection{Matrix completion problems}
We also test our method on the classic matrix completion problems. We use MovieLens datasets \cite{Harper2015} and Jester joke datasets \cite{Goldberg2001}. We extended the original data in the Jester datasets by adding comments through averaging random users' values with slight perturbations. We also generate synthetic data following the procedure of \cite{Toh2010}. We summarize the statistics of these data in Table \ref{table:lrmc_stat}. The $\varrho$ in \eqref{eq:lscnnm} is set by $\varrho=c\|b\|$ where the coefficient $c$ is in the last column of \ref{table:lrmc_stat}.
\begin{table}[t]
	\centering
	\caption{Statistics of test data in low-rank matrix completion}
	\begin{tabular}{cccccccc}
		\hline
		Prob   idx & Name      & Source & $m$      & $n$     & $p$        & $\|b\|$  & $c=\frac{\varrho}{\|b\|}$ \\ \hline
		1             & u100K     &   \cite{Harper2015}    & 943    & 1682  & 49918    & 8.27+2 & 0.2   \\
		2             & uleastsmall       &   \cite{Harper2015}    & 610    & 9724  & 53246    & 8.39+2 & 0.25  \\
		3             & u1M       &    \cite{Harper2015}   & 6040   & 3706  & 498742   & 2.65+3 & 0.25  \\
		4             & u20m      &  \cite{Harper2015}     & 7120   & 14026 & 525002   & 2.67+3 & 0.25  \\
		5             & u10M      &  \cite{Harper2015}     & 71567  & 10677 & 4983232  & 8.19+3 & 0.2   \\
		6            & u25m      & \cite{Harper2015}      & 162541 & 59047 & 12472784 & 1.30+4 & 0.25  \\
		7            & ulatest   &   \cite{Harper2015}    & 330975 & 83239 & 17198680 & 1.54+4 & 0.25  \\
		8             & jester1ex &  \cite{Goldberg2001}     & 24983  & 10100 & 249830   & 3.35+3 & 0.3   \\
		9             & jester2ex &   \cite{Goldberg2001}    & 23500  & 10100 & 235000   & 3.23+3 & 0.3   \\
		10             & jester3ex &   \cite{Goldberg2001}    & 24938  & 9926  & 265164   & 4.27+3 & 0.3   \\
		11            & jester4ex &  \cite{Goldberg2001}     & 54905  & 6653  & 553353   & 5.91+3 & 0.3   \\
		12            & jester5ex &   \cite{Goldberg2001}    & 7699   & 9011  & 88920    & 2.33+3 & 0.2   \\
		13            & Rand1     &   \cite{Toh2010}    & 7000   & 8000  & 2992002  & 1.22+4 & 0.2   \\
		14            & Rand2     &    \cite{Toh2010}   & 7000   & 10000 & 3385373  & 1.30+4 & 0.2   \\
		15            & Rand3     &   \cite{Toh2010}    & 7500   & 12000 & 4280292  & 1.54+4 & 0.2   \\
		16            & Rand4     &  \cite{Toh2010}     & 7500   & 10000 & 4182480  & 1.59+4 & 0.2   \\
		17            & Rand5     &   \cite{Toh2010}    & 9000   & 14000 & 3444147  & 1.31+4 & 0.2   \\
		18            & Rand6     &   \cite{Toh2010}    & 10000  & 12000 & 3947772  & 1.54+4 & 0.25  \\
		19            & Rand7     &  \cite{Toh2010}     & 10000  & 12500 & 4037941  & 1.56+4 & 0.25  \\
		20            & Rand8     &  \cite{Toh2010}     & 20000  & 15000 & 5245442  & 1.62+4 & 0.25  \\
		21            & Rand9     & \cite{Toh2010}      & 30000  & 31000 & 10972303 & 2.57+4 & 0.25  \\
		22            & Rand10    &  \cite{Toh2010}     & 50000  & 8000  & 10433300 & 2.50+4 & 0.25  \\ \hline
	\end{tabular}
	\label{table:lrmc_stat}
\end{table}

Except where noted, settings for matrix completion follow those for matrix regression. For our $\text{GLS}_\text{sec}$ and $\text{GLS}_\text{bis}$, we set $\lambda^1=0.1\times\|\cA^*b\|_2$ for (semi)real instances and $\lambda^1=0.3\times\|\cA^*b\|_2$ for synthetic instances. In alternating minimization, we adopt the aforementioned majorization. Accordingly, we use a larger $k_\text{check}$ for line 4 in Algorithm \ref{alg:RS}.

The results of the matrix completion problems are presented in Table \ref{table:lrmc_test}. Our method $\text{GLS}_\text{sec}$ continues to demonstrate its superiority in this context.  When evaluating SPGL1 and ADMM for solving \eqref{eq:lscnnm}, numerous instances exceed the time limit or terminate due to insufficient memory (marked as 'oom' in the table), despite our desktop having 64GB of RAM. For instances that SPGL1 can solve, $\text{GLS}_\text{sec}$ is 10 to 100 times faster. Although ADMM outperforms $\text{GLS}_\text{sec}$ in Problem 1, the smallest instance in our test, $\text{GLS}_\text{sec}$ is 15 to 40 times faster for other instances that ADMM can solve. 

The comparison between the secant method and the bisection method is detailed in Table \ref{table:lrmc-sec-bis}. The secant method $\text{GLS}_\text{sec}$  is approximately twice as fast as the bisection variant $\text{GLS}_\text{bis}$ for most instances, and it requires significantly fewer iterations.
\begin{table}[!h]
	\centering
	\caption{The performance of proximal generation based level set method with secant iterations ($\text{GLS}_\text{sec}$), SPGL1 (SPG), and ADMM in solving the least-squares constrained nuclear norm minimization \eqref{eq:lscnnm} with the $\varrho=c\|b\|$ in Table \ref{table:lrmc_stat}.}	
	\begin{tabular}{ccccc}
		\hline
		\multirow{2}{*}{idx} & \multirow{2}{*}{rank} & time(s)                                                           &  & residual                              \\
		&                       & $\text{GLS}_\text{sec}$ : SPG : ADMM                              &  & $\text{GLS}_\text{sec}$ : SPG : ADMM  \\ \hline
		1                    & 60                    & 6.88+0 $\mid$  5.19+1 $\mid$  4.45+0                           &  & 2.33-4 $\mid$ 9.86-4 $\mid$ 9.54-4 \\
		2                    & 43                    & 1.05+1 $\mid$  2.02+2 $\mid$  1.63+2                           &  & 5.54-4 $\mid$ 4.34-4 $\mid$ 9.61-4 \\
		3                    & 32                    & 4.70+1 $\mid$  4.02+3 $\mid$  7.16+2                           &  & 2.68-4 $\mid$ 5.30-4 $\mid$ 2.10-4 \\
		4                    & 91                    & 2.96+2 $\mid$  \textgreater{}10800   $\mid$  8.17+3             &  & 4.04-4 $\mid$ 5.85-2 $\mid$ 5.61-4 \\
		5                    & 303                   & 6.14+3 $\mid$  oom $\mid$  oom                             &  & 4.16-5 $\mid$ nan $\mid$ nan         \\
		6                    & 139                   & 5.63+3 $\mid$  oom $\mid$  oom                             &  & 8.47-4 $\mid$ nan $\mid$ nan         \\
		7                    & 215                   & 1.18+4 $\mid$  oom $\mid$  oom                             &  & 2.03-4 $\mid$ nan $\mid$ nan         \\
		8                    & 150                   & 5.15+2 $\mid$  \textgreater{}10800   $\mid$  \textgreater{}10800 &  & 8.15-4 $\mid$ 5.14-1 $\mid$ 2.76+0 \\
		9                    & 153                   & 4.37+2 $\mid$  \textgreater{}10800   $\mid$  \textgreater{}10800 &  & 1.88-4 $\mid$ 5.14-1 $\mid$ 2.21+4 \\
		10                   & 131                   & 3.90+2 $\mid$  \textgreater{}10800   $\mid$  \textgreater{}10800 &  & 8.05-4 $\mid$ 5.54-1 $\mid$ 7.25-2 \\
		11                   & 65                    & 7.53+2 $\mid$  oom $\mid$  \textgreater{}10800                &  & 7.10-4 $\mid$ nan $\mid$ 2.28+4     \\
		12                   & 84                    & 7.99+1 $\mid$  \textgreater{}10800   $\mid$  2.94+3             &  & 8.10-4 $\mid$ 5.13-1 $\mid$ 6.53-4 \\
		13                   & 50                    & 2.26+2 $\mid$  3.96+3 $\mid$  6.08+3                           &  & 4.68-4 $\mid$ 4.02-4 $\mid$ 5.89-4 \\
		14                   & 50                    & 2.59+2 $\mid$  7.16+3 $\mid$  6.78+3                           &  & 1.43-4 $\mid$ 8.60-4 $\mid$ 7.91-4 \\
		15                   & 55                    & 4.11+2 $\mid$  9.90+3 $\mid$  9.48+3                           &  & 5.38-4 $\mid$ 3.63-1 $\mid$ 4.09-4 \\
		16                   & 60                    & 3.63+2 $\mid$  3.41+3 $\mid$  8.34+3                           &  & 7.02-4 $\mid$ 9.99-4 $\mid$ 3.16-4 \\
		17                   & 50                    & 4.08+2 $\mid$  \textgreater{}10800   $\mid$  \textgreater{}10800 &  & 6.24-4 $\mid$ 3.16-2 $\mid$ 1.71-3 \\
		18                   & 60                    & 5.38+2 $\mid$  \textgreater{}10800   $\mid$  \textgreater{}10800 &  & 9.76-4 $\mid$ 1.56-1 $\mid$ 9.50-3 \\
		19                   & 60                    & 5.57+2 $\mid$  \textgreater{}10800   $\mid$  \textgreater{}10800 &  & 9.73-4 $\mid$ 9.57-2 $\mid$ 1.15-2 \\
		20                   & 50                    & 1.07+3 $\mid$  \textgreater{}10800   $\mid$  \textgreater{}10800 &  & 6.69-4 $\mid$ 6.71-1 $\mid$ 1.13+5 \\
		21                   & 60                    & 5.00+3 $\mid$  oom $\mid$  oom                             &  & 6.10-6 $\mid$ nan $\mid$ nan         \\
		22                   & 60                    & 2.64+3 $\mid$  \textgreater{}10800   $\mid$  \textgreater{}10800 &  & 7.43-5 $\mid$ 2.68-1 $\mid$ 1.56+5 \\ \hline
	\end{tabular}
	\label{table:lrmc_test}
\end{table}

\begin{table}[!h]
			\centering
	\caption{The performance of proximal generation based level set method with secant iterations ($\text{GLS}_\text{sec}$) and with bisection iterations ($\text{GLS}_\text{bis}$)}
	\begin{tabular}{cccccccccc}
		\hline
		\multirow{2}{*}{idx} & \multicolumn{4}{c}{$\text{GLS}_\text{sec}$}              &  & \multicolumn{4}{c}{$\text{GLS}_\text{bis}$}              \\
		& $\lambda$      & iter & time(s)    & residual &  & $\lambda$      & iter & time(s)    & residual \\ \hline
		1                    & 1.016+1 & 5:1  & 6.88+0 & 2.33-4  &  & 1.016+1 & 9    & 1.21+1 & 2.28-4  \\
		2                    & 1.314+1 & 2:3  & 1.05+1 & 5.54-4  &  & 1.315+1 & 10   & 2.29+1 & 7.06-5  \\
		3                    & 3.015+1 & 3:3  & 4.70+1 & 2.68-4  &  & 3.014+1 & 10   & 9.54+1 & 9.30-4  \\
		4                    & 2.306+1 & 9:5  & 2.96+2 & 4.04-4  &  & 2.299+1 & 17   & 2.85+2 & 6.56-4  \\
		5                    & 3.857+1 & 7:3  & 6.14+3 & 4.16-5  &  & 3.855+1 & 9    & 6.43+3 & 6.77-4  \\
		6                    & 7.624+1 & 5:1  & 5.63+3 & 8.47-4  &  & 7.693+1 & 9    & 1.17+4 & 7.53-5  \\
		7                    & 7.741+1 & 4:2  & 1.18+4 & 2.03-4  &  & 7.860+1 & 7    & 2.26+4 & 8.73-4  \\
		8                    & 1.539+1 & 7:0  & 5.15+2 & 8.15-4  &  & 1.552+1 & 11   & 7.09+2 & 3.12-4  \\
		9                    & 1.496+1 & 6:0  & 4.37+2 & 1.88-4  &  & 1.496+1 & 6    & 4.32+2 & 1.88-4  \\
		10                   & 2.259+1 & 3:1  & 3.90+2 & 8.05-4  &  & 2.383+1 & 20   & 1.05+3 & 2.77-2  \\
		11                   & 3.632+1 & 7:5  & 7.53+2 & 7.10-4  &  & 3.673+1 & 20   & 1.36+3 & 6.57-4  \\
		12                   & 1.389+1 & 5:2  & 7.99+1 & 8.10-4  &  & 1.389+1 & 7    & 8.54+1 & 4.86-4  \\
		13                   & 7.252+1 & 2:1  & 2.26+2 & 4.68-4  &  & 7.257+1 & 10   & 4.50+2 & 5.47-4  \\
		14                   & 7.346+1 & 2:1  & 2.59+2 & 1.43-4  &  & 7.344+1 & 10   & 5.42+2 & 2.21-4  \\
		15                   & 8.196+1 & 2:1  & 4.11+2 & 5.38-4  &  & 8.194+1 & 10   & 8.98+2 & 8.13-4  \\
		16                   & 8.775+1 & 2:2  & 3.63+2 & 7.02-4  &  & 8.778+1 & 12   & 8.02+2 & 6.78-5  \\
		17                   & 5.455+1 & 2:3  & 4.08+2 & 6.24-4  &  & 5.440+1 & 13   & 8.59+2 & 6.48-4  \\
		18                   & 8.246+1 & 4:2  & 5.38+2 & 9.76-4  &  & 8.229+1 & 8    & 7.08+2 & 1.33-4  \\
		19                   & 8.268+1 & 4:2  & 5.57+2 & 9.73-4  &  & 8.283+1 & 9    & 7.94+2 & 2.88-4  \\
		20                   & 6.902+1 & 4:1  & 1.07+3 & 6.69-4  &  & 6.939+1 & 8    & 1.59+3 & 1.81-5  \\
		21                   & 8.239+1 & 6:2  & 5.00+3 & 6.10-6  &  & 8.235+1 & 10   & 6.23+3 & 3.50-4  \\
		22                   & 1.194+2 & 4:1  & 2.64+3 & 7.43-5  &  & 1.194+2 & 10   & 4.59+3 & 8.15-4  \\ \hline
	\end{tabular}
\label{table:lrmc-sec-bis}
\end{table}

We also compare our method with LoRADS \cite{Han2024}, a state-of-the-art SDP solver for matrix completion. Although LoRADS is designed for linear equality constraints, we constructed an equality-constrained counterpart using the optimal solution from $\text{GLS}_{\text{sec}}$ to enable a fair comparison.
Following their large-scale matrix completion setup, we use initial rank $\log m$, heuristic factor 2.5, phase 2 tolerance $10^{-3}$ and test four phase 1 tolerances ($10^{-3}$, $5\times10^{-3}$, $10^{-2}$ and $10^{-1}$) with a 3600s time limit. For fair comparison, we include the APG initialization time in $\text{GLS}_{\text{sec}}$'s total runtime since LoRADS cannot leverage this warm-start. Note that LoRADS deals with the SDP form, whose primal problem is $\min_{X\in\bS_+}\left\langle C,X \right\rangle \text{ s.t. } \cA X=b$ and dual problem is $\max_{\lambda}b^T\lambda\text{ s.t. }C-\cA^*\lambda\in\bS_+$.
 For evaluation, we compute the following primal feasibility, dual feasibility, and relative gap:
$$
\frac{\|\mathcal{A}(X)-b\|_2}{1+\|b\|_2}, \frac{\left|\min \left\{0, \sigma_{\min }\left(C-\mathcal{A}^*(\lambda)\right)\right\}\right|}{1+\|C\|_2}, \frac{\langle C, X\rangle-\lambda^{\top} b}{1+|\langle C, X\rangle|+\left|\lambda^{\top} b\right|}.
$$  
Table \ref{table:lorads} presents the best LoRADS results across different settings. The numerical results show that on our test problems, LoRADS encounters several challenges: (1) it cannot solve some larger-scale instances, (2) it requires substantially more computation time for solvable problems, and (3) it obtains solutions with relatively higher residuals within the time limit. Our method demonstrates consistent performance across all problem scales, achieving both computational efficiency and superior solution quality.
	\begin{table}[!h]
		\centering
		\caption{The performance of proximal generation based level set method with secant iterations ($\text{GLS}_\text{sec}$) and LoRADS}
	\begin{tabular}{cccc}
		\hline
		\multirow{2}{*}{idx} & $\text{GLS}_\text{sec}$                 &  & LoRADS                                   \\
		& time  $\mid$  residual &  & time   $\mid$   gap : prim : dual            \\ \hline
		3                      & 4.97+1   $\mid$   2.68-4 &  & 3.73+3  $\mid$  2.75-3 : 2.13-3 : NA      \\
		5                   & 6.21+3 $\mid$ 4.16-5 &  & out of memory                            \\
		4                     & 2.99+2 $\mid$ 4.04-4 &  & 3.93+3 $\mid$ 1.43-1 : 3.39-2 : NA      \\
		6                     & 5.83+3 $\mid$ 8.47-4 &  & out of memory                            \\
		7                  & 1.21+4 $\mid$ 2.03-4 &  & out of memory                            \\
		8                & 5.23+2 $\mid$ 8.15-4 &  & 2.81+3 $\mid$ 4.49-5 : 3.10-4 : 6.58-4 \\
		9                & 4.46+2 $\mid$ 1.88-4 &  & 2.56+3 $\mid$ 1.92-4 : 3.70-4 : 6.31-4 \\
		10                & 3.96+2 $\mid$ 8.05-4 &  & 3.22+3 $\mid$ 2.56-4 : 2.75-4 : 2.76-3 \\
		11                & 7.61+2 $\mid$ 7.10-4 &  & out of memory                            \\
		12                & 8.16+1 $\mid$ 8.10-4 &  & 9.74+2 $\mid$ 3.20-5 : 2.00-4 : 8.31-3 \\
		13                & 2.87+2 $\mid$ 4.68-4 &  & 3.87+3 $\mid$ 9.90-1 : 1.44+0 : NA      \\
		15                & 5.02+2 $\mid$ 5.38-4 &  & 4.30+3 $\mid$ 9.60-1 : 1.29+0 : NA      \\
		17                & 4.67+2 $\mid$ 6.24-4 &  & 4.11+3 $\mid$ 9.60-1 : 1.29+0 : NA      \\
		19                & 6.21+2 $\mid$ 9.73-4 &  & 4.79+3 $\mid$ 9.71-1 : 1.45+0 : NA      \\
		20                & 1.21+3 $\mid$ 6.69-4 &  & 4.94+3 $\mid$ 9.42-1 : 1.71+0 : NA      \\
		21                & 5.33+3 $\mid$ 6.10-6 &  & out of memory                            \\
		22               & 3.00+3 $\mid$ 7.43-5 &  & out of memory                            \\ \hline
	\end{tabular}
	\label{table:lorads}
\end{table}
\section{Conclusion and future work}

In this paper, we proposed a proximal generation based level set method with secant iterations for least-squares constrained nuclear norm minimization.
Theoretically, we established the nonsingularity of the Clarke generalized Jacobian for projection norm functions over closed convex sets. This fundamental result guarantees superlinear convergence of our secant method, achieving a fast convergence rate of $(1+\sqrt{5})/2$ under dual constraint nondegeneracy.
Computationally, we introduced a proximal generation method to solve the inner subproblems efficiently. By dynamically exploiting low-rank structures, our method significantly reduces the problem dimensionality. Extensive experiments demonstrate that our approach consistently outperforms state-of-the-art algorithms.

For future work, we plan to investigate the strong semismoothness of projection operators under conditions milder than nondegeneracy. We also aim to extend our algorithmic framework to broader classes of decomposed matrix optimization problems.

\bmhead{Acknowledgments}

Chiyu Ma is supported in part by RGC Senior Research Fellow Scheme No. SRFS2223-5S02. Defeng Sun is supported in part by GRF Projects No. 15309625 and No. 15305324.

\section*{Declarations}
\textbf{Conflict of interest} The authors have no relevant
interests to disclose.



\end{document}